\newtheorem{theorem}{Theorem}[section]
\newtheorem{lemma}[theorem]{Lemma}
\newtheorem{definition}[theorem]{Definition}
\newtheorem{corollary}[theorem]{Corollary}
\newtheorem{remark}[theorem]{Remark}
\newcommand{\EE}{\mathbb{E}}
\newcommand{\FF}{\mathbb{F}}
\newcommand{\NN}{\mathbb{N}}
\newcommand{\PP}{\mathbb{P}}
\newcommand{\AS}{\mathbb{S}}
\newcommand{\DD}{\mathbb{D}}
\newcommand{\B}{{\cal B}}
\newcommand{\D}{{\cal D}}
\newcommand{\E}{{\cal E}}
\newcommand{\J}{{\cal J}}
\newcommand{\tT}{{\cal T}}
\newcommand{\Y}{{\cal Y}}
\newcommand{\cZ}{{\cal Z}}
\begin{document}

\title{\itshape A probabilistic analysis of a discrete-time 
evolution in recombination}
\author{Servet Mart{\'i}nez}

\maketitle

\begin{abstract}
We give a closed form of the discrete-time evolution of 
a recombination transformation in population genetics. 
This decomposition allows to define a Markov chain
in a natural way. 
We describe the geometric decay rate to the limit 
distribution,
and the quasi-stationary behavior 
when conditioned to the event that the chain 
does not hit the limit distribution.
\end{abstract}

\bigskip

\noindent {\bf Keywords: $\,$} Markov chain; Population 
genetics;   Recombination; geometric decay rate; quasi-stationary
distributions.  

\bigskip

\noindent {\bf AMS Subject Classification:\,} 60J10; 92D10. 

\section{ Introduction }
\label{sec0}

Here we study the evolution of the following transformation 
$\Xi$ acting on the set of probability 
measures $\mu$ on a product measurable space $\prod_{i\in I}A_i$, 
$$
\Xi[\mu]=\sum_{J\subseteq I} \rho_J \, \mu_J \otimes \mu_{J^c}.
$$
Here $\rho=(\rho_J: J\subseteq I)$ is a probability vector,
$\mu_J$ and $\mu_{J^c}$ are the marginals of $\mu$ on 
$\prod_{i\in J}A_i$ and $\prod_{i\in J^c}A_i$ respectively, and 
$\otimes$ means that these marginals 
are combined in an independent way.

\medskip

The analysis of $\Xi$ should give an insight in the  
study of the genetic composition 
of population under recombination. Genetic information is encoded 
in terms of sequences of symbols indexed by a finite set of 
sites. In the process of 
recombination the children sequences are derived from two parents, 
a subset of sites is encoded with the 
maternal symbols and the complementary set 
is encoded with the paternal symbols. 
The above equation expresses 
that these sets $(J,J^c)$ constitute a
probabilistic object distributed according to $\rho$. A relevant 
feature is that recombination produces decorrelation between 
sites and this is expressed 
by the fact that the sequence distribution 
on these sets are grouped independently into $\Xi[\mu]$.

\medskip

The evolution $(\Xi^n[\mu])$ has been mainly studied in the
context of single cross-overs, that is where $I=\{1,..,K\}$ and 
the pairs of sets $(J,J^c)$ are of the form
$J=\{i: i<j\}$, $J^c=\{i: i\ge j\}$. 
This evolution was introduced by H. Geiringer \cite{ge}, and firstly 
solved in the continuous-time case by E. Baake and M. Baake
\cite{bb}, where it is also supplied an important corpus of ideas 
and techniques to study the discrete-time evolution. 
In relation to the discrete-time evolution 
we refer to \cite{bvw}: 
{\it '...the corresponding discrete-time dynamics, which is prevalent 
in the biological literature, is more difficult: its solution has, 
so far, required nontrivial
transformations and recursions that have not been solved in closed 
form (Benett 1954; Dawson 2000, 2002; von Wangenheim et al. 2010).'}
These last works are cited in our list of references as \cite{be}; 
\cite{daw1}, \cite{daw2}; \cite{vwbb}.

\medskip

Richer discussions on the interpretation of the above equation
in a broader perspective of recombination in population genetics, 
are given in the 
introductory sections of references \cite{bb}, \cite{bvw}, 
\cite{bbs} and \cite{uvw}. 

\medskip

When studying single cross-over recombination, one the main
objectives in \cite{uvw} and \cite{bvw} is
to express the iterated $\Xi^n[\mu]$ in a simple form. The main 
tools in these works are M\"{o}bius inversion formulae, 
similarly to the continuous case, 
and commutation
relations between $\Xi$ and recombination operators. Some of the 
main results of these works are the one step recursive relation 
stated in Theorem 1 in \cite{bvw},  
Proposition 3.3 in \cite{uvw} stating that  
if one starts from a distribution
$\mu$ then $\Xi^n[\mu]$ converges to the Bernoulli distribution
having the marginals of $\mu$, and the relation to ancestry trees
and Markov chains, summarized in Theorem 3 in \cite{bvw}.

\medskip

In our work we present two main results, these are Theorems
\ref{theo0} and \ref{theo1}. 

\medskip

In Theorem \ref{theo0} we write $\Xi^n[\mu]$ as a weighted decomposition 
of $\otimes_{\ell \in \delta} \mu_\ell$, where
$\mu_\ell$ is the marginal $\mu$ on the set $\ell$, and 
$\{\ell \in \delta\}$ are the 
atoms of some partition $\delta$ of $I$, and we give exactly the weights 
of this decomposition. This follows from a simple backward 
development of $\Xi^n[\mu]$ done in Lemma \ref{lemmafd}. 
When looking in detail the formulae stated in Theorem \ref{theo0} 
one realizes that
they define a natural Markov chain $(Y_n)$ on the set 
of partitions of $I$, having the remarkable property that when 
it starts from the coarsest partition $\{I\}$, then the probability 
that $\{Y_n=\delta\}$ is equal to the sum 
of weights of all trees participating in the backward development of
$\Xi^n[\mu]$ whose set of leaves is $\{\ell \in \delta\}$.
These results are Lemmas \ref{lemma4} and \ref{lemma5}.

\medskip

In Theorem \ref{theo1} we use this Markov chain to describe the 
geometric coefficient of convergence to the limit distribution
$\otimes_{\ell \in \D^\rho} \mu_\ell$, where
$\D^\rho$ is the partition generated by the sets $\{J: \rho_J>0\}$.
In the single cross-over case the atoms of this partition
are the singletons, so the limit probability measure is the
Bernoulli distribution. 
A key result is formula (\ref{50e}) that characterizes 
the geometric decay behavior. 
In this Theorem we also study in detail the limiting conditional 
behavior of the chain when conditioned to the fact that it has not hit 
the limit distribution. 
Besides giving the limiting conditional distribution, we state
a ratio limit of the probabilities of not hitting the limit
distribution. We emphasize that these last results are not 
a consequence of any known result in 
the theory of quasi-stationary distributions because the Markov 
chain $(Y_n)$ is not irreducible on the class of non-absorbing states,  
so we are not able to use the Perron-Frobenius theory. All these results
require entirely new computations. Quasi-stationary distributions
have been studied mostly in relation to population extinction,
see for instance Section 2.6 in \cite{les}, and \cite{pp, cms} 
for a wide ranging bibliography on the subject. 
In our context the absorbing state is not the void population as 
happens in extinction, and a main interest 
of the quasi-limiting behavior is in the process that never hit
the limit distribution which is given in Corollary \ref{cor1}. 

\medskip

In Section \ref{sec1} we fix notation on partitions, atoms, and 
dyadic partitions. In Section \ref{sec2} we define supply 
some technical lemmas on the transformation $\Xi$. Thus, 
in Lemma \ref{lemma3} we get the marginal $\Xi[\mu]_K$ for $K$ 
a union of atoms, in terms of some iterated coefficients $\rho^K_M$ 
derived from $\rho$ which constitute key quantities along all our 
study. In Section \ref{sec3} we introduce the
dyadic family of trees depending on the support of $\rho$ 
participating in the tree decomposition of $\Xi^n[\mu]$. 
Finally, in Section \ref{sec4}
we introduce the Markov chain on partitions 
and state our main results on the quasi-limiting behavior. 

\medskip

Let us discuss briefly the relations of our results with respect to 
previous literature mainly with respect to \cite{uvw} and \cite{bvw}, 
which have been an important inspiration for our work. In these 
references a Markov chain on partitions was introduced for single 
cross-over recombination, 
by following the ancestry of the genetic material
of a selected individual from a population and using some
limits arguments. As a consequence of this rather 
complicated construction, a key 
relation between the Markov chain and
the coefficients of the iterated $\Xi^n[\mu]$ is stated in Theorem 3 in 
\cite{bvw}, which must be the same relation we
state in Lemma \ref{lemma5}. We note, that each backward step 
in ancestry involves a probabilistic object because 
the dyadic partition $(J,J^c)$ is randomly distributed.
But, our approach differs with the one used in \cite{bvw} at some 
substantial points: we get a closed form 
of $\Xi^n[\mu]$ by using a simple backward decomposition and this 
decomposition suggests the definition of the 
Markov chain $(Y_n)$ in a very natural way. Our techniques are totally
different to those used in \cite{uvw} and \cite{bvw}.
Also, our result apply to all kind
of dyadic partitions $(J,J^c)$ that can have a complex combinatorics
and not only for the ones arising in the single cross-over case.
Finally, the study of the quasi-stationary behavior 
of this chain is, to our knowledge, firstly studied in this 
monograph. 

\medskip

We point out that even if our results are stated for a 
product of finite spaces, they can be stated for general 
product of measurable spaces as 
pointed out in Remark \ref{rem2a}.

\medskip

Recently, in \cite{bbs}, the continuous-time evolution 
was studied in a framework of
general partitions other than dyadic partitions. 
The extension of our results to the analogous framework 
but for discrete-time, deserves a different study.

\medskip

It is worth mentioning, that in Section 
\ref{sec2} and in the final comment of this work, we 
point out that all our results remain true when 
$\otimes$ is a commutative and associative operation, it has 
an identity element and is also stable under 
restriction. It could be explored the existence of good candidates 
for operations $\otimes$ other than the product of probability 
measures, that would be meaningful in population genetics.

\section{ Partitions }
\label{sec1}

In this section we fix some notation on partitions. Some emphasis
is put in defining a partition from a family of sets, with a 
special care in defining the atoms, and we make the difference 
between dyadic and strictly dyadic partitions, the last ones 
having exactly two atoms.

\medskip

Let $I$ be a finite set and
$\AS(I)=\{L: L\subseteq I\}$ be the class of its subsets
($\subseteq$ means inclusion and
$\subset$ strict inclusion).
For any class of sets $\cZ\subseteq \AS(I)$ we put  
$\cZ^{(\emptyset)}=\cZ\setminus \{\emptyset\}$, 
$\cZ^{(I)}=\cZ\setminus \{I\}$ and 
$\cZ^{(\emptyset,I)}=\cZ\setminus \{\emptyset, I\}$.
So, when $\cZ$ does not contain the empty set we 
have $\cZ^{(I)}=\cZ^{(\emptyset,I)}$.

\medskip

A partition $\D$ of $I$ is a collection of nonempty sets
(so $\D\subseteq \AS(I)^{(\emptyset)})$,
pairwise disjoint and covering $I$. 
We note $\D=\{L: L\in \D\}$ and
any of the sets $L$ is called an atom 
of $\D$. We note by $\DD(I)$ the family of partitions of $I$.

\medskip

For $\D, \D'\in \DD(I)$, $\D'$ is said to be finer than 
$\D$ or $\D$ is coarser than $\D'$, 
if every atom of $\D'$ is contained in an atom of $\D$. 
In this case every atom in $\D$ is union of atoms of $\D'$. 
The finer partition is the class 
of singletons $\D_{I,si}=\{\{i\}: i\in I\}$, and the coarsest 
one is $\{I\}$. 

\medskip

Let $\J\subseteq \AS(I)^{(\emptyset)}$ be a nonempty 
family of nonempty sets which satisfies,
$J\in \J, J\neq I \, \Rightarrow \, J^c\in \J$.
Then, it defines a partition $\D(\J)\in \DD(I)$ as follows.
Let $\Y_1(\J)=\J$ and define by recursion
the following family of classes of nonempty sets,
$$
\forall\, n\ge 1: \quad
\Y_{n+1}(\J)=\{K\cap J: K\in \Y_n(\J), J\in \J, 
K\cap J\neq \emptyset\}. 
$$ 
Since
$J\cap J=J$, we have
$\Y_n(\J)\subseteq \Y_{n+1}(\J)$ for all $n\ge 1$.
Also it stabilizes in a finite number of steps, 
that is there exists $n_0\ge 1$ such 
that 
$\Y_{n_0+k}(\J)=\Y_{n_0}(\J)$ for all $k\ge 0$. Let
\begin{equation}
\label{e7}
\Y(\J)=\bigcup_{n\ge 1} \Y_n(\J).
\end{equation}
We define the atoms of the partition $\D(\J)$ by:
\begin{equation}
\label{e8}
L\in \D(\J)\Leftrightarrow \big[ \, L\in \Y(\J) \hbox{ and }
\forall J\in \J : \, J\cap L =L \vee J\cap L =\emptyset \, \big].
\end{equation}
It is clear that the atoms $L\in \D(\J)$ are disjoint, on
the other hand they cover $I$, because in the contrary 
the set $I\setminus (\bigcup_{L\in \D_I(\J)} L)$ would have 
a nonempty intersection with some $J\in \J$ and so it would
contain an atom of the form (\ref{e8}) leading to a 
contradiction. Then,
$I=\bigcup_{L\in \D_I(\J)} L$. It is also straightforward 
to show that
\begin{equation}
\label{e3}
\forall K\in \Y(\J): \quad K=\!\!\!\bigcup_{L\in \D(\J):
L\subseteq K}\!\!\!L.
\end{equation}

\medskip

It is useful to introduce {\it dyadic} partitions.
The set of dyadic partitions on $I$ is noted by
$\DD_{1,2}(I)$ and it is given by
$$
\DD_{1,2}(I)=\{I\}\cup \{\{J,J^c\}: J\in \AS^{\emptyset,I}(I)\}.
$$  
The ${\;}_{1,2}$ subscript
is because a partition $\delta\in \DD_{1,2}(I)$ can have one
or two atoms. It has one atom only in the case 
$\delta=\{I\}$, in all other cases it contains 
two atoms. We will make
the distinction with respect to the family of {\it strictly dyadic} 
partitions, which is the class of partitions having exactly 
two atoms,
$$
\DD_{2}(I)=\{\{J,J^c\}: J\in \AS^{\emptyset, I}(I)\}.
$$

From now on, we fix $I$ and call it the set of sites.
In the notation of the variables we will often delete the
dependence on $I$, thus we write
$\AS=\AS(I)$, $\DD=\DD(I)$, $\DD_{1,2}=\DD_{1,2}(I)$, 
$\DD_2=\DD_2(I)$ and 
so on. But we keep the
dependence of these quantities on a set $J$ when it 
is not necessarily $I$, in this case we write 
$\AS(J)$, $\DD(J)$, $\DD_{1,2}(J)$, $\DD_2(J)$ and so on.

\section{The recombination transformation}
\label{sec2}

In this section we define the action of $\Xi$ on the set of 
probability of measures of a product measurable space. 
For simplicity we assume the product space is finite, in fact 
all our results remain true for a product 
of general measurable spaces. But, the finiteness of the 
set of sites $I$ is crucial. We will 
supply some elementary properties of $\Xi$,
a main one being the description of the marginal of the
transformed probability measure, this is done 
in Lemma \ref{lemma3}. This description is written in terms 
of some coefficients whose main properties are summarized in 
Lemma \ref{lemma2}. We devote some time to state 
exactly the properties of $\otimes$ that will used in this work.

\medskip

Let $A_i$ be a finite set for $i\in I$, called the 
alphabet on site $i$.
Let $\prod_{i\in I}A_i$ be the product space.
In order that our statements are for non-trivial, 
we will assume that the sets $I$ and $A_i$ for $i=1,..,n$,  
contain at least two elements. 

\medskip

We note by $x$ an element of $\prod_{i\in I}A_i$,
so $x=(x_i\in A_i: i\in I)$. 
Denote by ${\cal P}_I$ the set of probability measures
on $\prod_{i\in I}A_i$. Any $\mu\in {\cal P}_I$
is determined by the values
$(\mu(x): x\in \prod_{i\in I}A_i)$.
Let $J\in \AS$. We note $x_J=(x_i: i\in J)$ and
make the identification $x=(x_J,x_{J^c})$. We
denote by ${\cal P}_J$ the set of probability measures
on $\prod_{i\in J}A_i$.

\medskip

The marginal $\mu_J\in {\cal P}_J$ of $\mu\in {\cal P}_I$ 
on $J$ is, 
\begin{equation}
\label{eab2}
\forall x_J\in \prod_{i\in J}A_i: \quad \mu_J(x_J)
:=\mu(\{y\in \prod_{i\in I}A_i: y_i=x_i\})=
\sum_{x_{J^c}\in \prod_{i\in J^c}A_i}\!\!\!\mu(x).
\end{equation}
For $J=I$ we have $\mu_I=\mu$, and for 
$J=\emptyset$ we have $\mu_\emptyset(x_\emptyset)=1$.
We put $\mu_\emptyset\equiv 1$ to get
consistency in all the relations where it will appear.

\medskip
 
If $K\subseteq J$ then the marginal $\mu_{K}$ can be defined
from $\mu_J$, that is it satisfies
\begin{equation}
\label{eab3}
\mu_K(x_K)=\sum_{x_{J\setminus K}\in 
\prod_{i\in J\setminus K}A_i}\mu_J(x_J).
\end{equation}

We take $\otimes$ to be the product measure: for all
$\mu^J\in {\cal P}_J$, $\mu^K\in {\cal P}_K$,
$$
\forall x_{J\cup K}\in \prod_{i\in J\cup K} A_i: 
\quad \mu^J\otimes \mu^K(x_{J\cup K})=\mu^J(x_J)\mu^K(x_K).
$$
Let us explicit the properties we will use from $\otimes$. 
First, the operation $\otimes$ is defined in the domains
\begin{equation}                  
\label{eab0} 
\forall J,K\in \AS, J\cap K=\emptyset; \quad
\otimes: {\cal P}_J\times {\cal P}_K\to {\cal P}_{J\cup K}.
\end{equation}
The operation $\otimes$ is commutative and associative, that is
for $J,K,M\in \AS$ pairwise disjoint,
$\mu^J\in {\cal P}_J$, $\mu^K\in {\cal P}_K$,
$\mu^M\in {\cal P}_M$, it is satisfied
\begin{equation}
\label{eab4}
\mu^J\otimes \mu^K=\mu^K\otimes \mu^J \hbox{ and }
(\mu^J\otimes 
\mu^K)\otimes\mu^M=\mu^J\otimes (\mu^K\otimes\mu^M).
\end{equation}  
Moreover, $\mu^\emptyset\equiv 1$ is an
identity element for $\otimes$, and 
$\otimes$ satisfies the following stability
property under restriction.

\begin{lemma}
\label{lemma0}
For all $J, K, M\in \AS$ with $J\cap K=\emptyset$
and $M\subseteq J\cup K$,
\begin{equation}
\label{eab}
(\mu_J\otimes \mu_{K})_M=\mu_{J\cap M}\otimes \mu_{K\cap M}.                
\end{equation}
\end{lemma}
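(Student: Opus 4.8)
The plan is to reduce the identity to the definition of the product measure together with the consistency of marginals, equation (\ref{eab3}). First I would record the two disjoint decompositions forced by the hypotheses $J\cap K=\emptyset$ and $M\subseteq J\cup K$: writing $M_J=J\cap M$ and $M_K=K\cap M$, one has $M=M_J\cup M_K$ with $M_J\cap M_K=\emptyset$, and likewise the complementary index set splits as $(J\cup K)\setminus M=(J\setminus M)\cup(K\setminus M)$, again a disjoint union. These decompositions let me split every coordinate vector compatibly as $x_J=(x_{M_J},x_{J\setminus M})$ and $x_K=(x_{M_K},x_{K\setminus M})$.

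Next I would write out the left-hand side directly from the definition of a marginal, summing over the coordinates not in $M$: for a fixed $x_M=(x_{M_J},x_{M_K})$,
\[
(\mu_J\otimes\mu_K)_M(x_M)=\sum_{x_{(J\cup K)\setminus M}}(\mu_J\otimes\mu_K)(x_{J\cup K})=\sum_{x_{J\setminus M}}\sum_{x_{K\setminus M}}\mu_J(x_J)\,\mu_K(x_K),
\]
where I have used the product formula $(\mu_J\otimes\mu_K)(x_{J\cup K})=\mu_J(x_J)\mu_K(x_K)$ and have rewritten the single sum over the disjoint complement $(J\cup K)\setminus M$ as an iterated sum. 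Because the summand $\mu_J(x_J)$ depends on the inner indices only through $x_{J\setminus M}$ and $\mu_K(x_K)$ only through $x_{K\setminus M}$, the double sum factorizes into a product of two independent single sums.

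Then I would identify each factor. Since $M_J\subseteq J$, equation (\ref{eab3}) gives $\sum_{x_{J\setminus M}}\mu_J(x_J)=\mu_{M_J}(x_{M_J})=\mu_{J\cap M}(x_{J\cap M})$, and symmetrically $\sum_{x_{K\setminus M}}\mu_K(x_K)=\mu_{K\cap M}(x_{K\cap M})$. Multiplying these and applying the product formula once more, now in the reverse direction, yields $\mu_{J\cap M}(x_{J\cap M})\,\mu_{K\cap M}(x_{K\cap M})=(\mu_{J\cap M}\otimes\mu_{K\cap M})(x_M)$, which is exactly the right-hand side evaluated at $x_M$. As $x_M$ was arbitrary, the two measures coincide.

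I expect no genuine obstacle here: the content is purely the factorization of a finite product measure together with consistency of marginals, and the only thing demanding care is the bookkeeping of which index sets are disjoint, so that the complement splits cleanly and the two inner sums are truly independent. I would also remark that the identity is precisely the \emph{stability under restriction} property flagged in the surrounding text; in the more general setting where $\otimes$ is an arbitrary commutative and associative operation with an identity element, this lemma would instead be imposed as an axiom rather than derived.
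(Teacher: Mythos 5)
Your proof is correct and follows essentially the same route as the paper's: expand the marginal via the definition (\ref{eab2}), split the sum over the complement $(J\cup K)\setminus M$ into the disjoint pieces $J\setminus M$ and $K\setminus M$, factorize, and identify each factor as a marginal using (\ref{eab3}). Your closing remark that the identity becomes an axiom for a general operation $\otimes$ accurately reflects the paper's framing of stability under restriction as one of the required properties.
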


\begin{proof}
This is a consequence of definition (\ref{eab2}) and
property (\ref{eab3}). In fact
\begin{eqnarray*}
(\mu_J\otimes \mu_{K})_M(x_{(J\cup K)\cap M})
&=&
\sum_{x_{(J\cup K)\setminus M}} 
(\mu_J \otimes \mu_K)(x_{J\cup K})\\
&=&\sum_{(x_{J\setminus M}, x_{K\setminus M})} 
\mu_J(x_J) \cdot \mu_K(x_K)\\ 
&=&\left(\sum_{x_{J\setminus M}}\mu_J(x_J)\right) 
\left(\sum_{x_{K\setminus M}}\mu_K(x_K)\right)\\
&=&\mu_{J\cap M}(x_{J\cap M})\cdot \mu_{K\cap M}(x_{K\cap M})\\
&=&
(\mu_{J\cap M}\otimes \mu_{K\cap M}) (x_{(J\cap M)\cup (K\cap M)}).
\end{eqnarray*}
\end{proof}
The above properties (\ref{eab0}), (\ref{eab4}), (\ref{eab}),
and $\mu^\emptyset$ an identity, are all we need from $\otimes$
to get the results of this work.

\medskip 

Note that commutation and associativity imply that 
$\bigotimes_{L\in \D}\mu_L\in {\cal P}_I$ is well-defined
for a partition $\D$ of $I$. 

\medskip

For $\otimes$ the product measure we have
$$
\forall x\in \prod_{i\in I}A_i: \quad 
\bigotimes_{L\in \D}\mu_L(x)=\prod_{L\in 
\D}\mu_L(x_L).
$$
If $\D=\D^{si}=\{\{i\}: i\in I\}$, 
$\bigotimes_{i\in I} \mu_{\{i\}}$ is 
called Bernoulli and $(\mu^{\{i\}}: i\in I)$ are the
one-site marginals.

\medskip

From now on, we fix $\rho=(\rho_J: J\in \AS)$ 
a probability vector, so $\rho_J\ge 0$ for $J\in \AS$ 
and $\sum_{J\in \AS}\rho_J=1$, and that also 
satisfies $\rho_\emptyset=0$.

\begin{definition}
\label{def1}
Define the following transformation $\Xi: {\cal P}_I\to {\cal P}_I$,
\begin{equation}
\label{e4}
\Xi[\mu]= \sum_{J\in \AS} 
\rho_J \, \mu_J\otimes \mu_{J^c}=\sum_{J\in \AS^{(\emptyset)}}
\rho_J \, \mu_J\otimes \mu_{J^c}. 
\end{equation}
$\Box$
\end{definition}
Since $\mu_\emptyset\equiv 1$, then
\begin{equation}
\label{e4x}
\Xi[\mu]= \rho_I \, \mu + \sum_{J\in \AS^{\emptyset,I}}
\rho_J \, \mu_J\otimes \mu_{J^c}.
\end{equation}

\smallskip

\begin{remark}
\label{rem0ax}
We have $\rho_\emptyset=0$, but 
we can have $\rho_I>0$. On the other hand, 
since $\mu_J\otimes \mu_{J^c}=\mu_{J^c}\otimes \mu_J$, 
we can assume when it is needed that $\rho_J=\rho_{J^c}$ for
$J\in \AS^{(\emptyset,I)}$. 
\end{remark}

Observe that formula (\ref{e4}) can be written in terms of dyadic 
and strictly dyadic partitions as,
$$
\Xi[\mu]=\sum_{\D\in \DD_{1,2}} 
(\sum_{K\in \D}\rho_K)\bigotimes_{K\in \D} \mu_K
=\rho_I \mu+ \sum_{\D\in \DD_{2}}
(\sum_{K\in \D}\rho_K)\bigotimes_{K\in \D} \mu_K.
$$

Let 
$$
\J_\rho=\{J\in \AS^{(\emptyset)}: \rho_J>0\} 
$$
be the support of $\rho$, so 
$\J_\rho^{(I)}=\{J\in \AS^{(\emptyset,I)}: \rho_J>0\}$ is the class 
of nonempty subsets strictly contained in $I$ 
which are in the support of $\rho$. Denote by
$$
D^\rho:=\D(\J_\rho)
$$ 
the partition generated by the class of sets $\J_\rho$, 
whose atoms satisfy (\ref{e8}) with $\J=\J_\rho$. 

\begin{lemma}
\label{lemma1}
$(i)$ Let $M$ be contained in an atom in $\D^\rho$, then $\Xi$ 
preserves the marginal on $M$, that is  
\begin{equation}
\label{e6}
[M\subseteq L, L\in \D^\rho]\, \Rightarrow \, \Xi[\mu]_M=\mu_M.
\end{equation}

\noindent $(ii)$ Let $\D$ be a partition finer than $\D^\rho$
(so $\D=\D^\rho$ is allowed), and
$\mu^{L}\in {\cal P}_L$ 
for $L\in \D$. Then, $\mu=\bigotimes_{L\in  \D} \mu^{L}$
is a fixed point for $\Xi$, that is $\, \Xi[\mu]=\mu$.
\end{lemma}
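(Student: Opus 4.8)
The plan is to evaluate $\Xi[\mu]_M$ (resp.\ $\Xi[\mu]$) term by term in the defining sum \eqref{e4}, and to feed in the combinatorial input coming from the atom characterization \eqref{e8}: every set $J$ in the support $\J_\rho$ meets each atom of $\D^\rho$ either fully or not at all. Together with the restriction stability of Lemma \ref{lemma0} and the fact that $\rho$ is a probability vector, this collapses every summand to the same object, yielding invariance.

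For part $(i)$, I would first commute the marginal $(\cdot)_M$ with the finite sum and apply Lemma \ref{lemma0} to each summand, obtaining
$$
\Xi[\mu]_M=\sum_{J\in\AS}\rho_J\,(\mu_J\otimes\mu_{J^c})_M
=\sum_{J\in\J_\rho}\rho_J\,\big(\mu_{J\cap M}\otimes\mu_{J^c\cap M}\big),
$$
where the terms with $\rho_J=0$ drop out. The key step is then the dichotomy: since $M\subseteq L$ for some atom $L\in\D^\rho$, the characterization \eqref{e8} applied with $\J=\J_\rho$ gives, for each $J\in\J_\rho^{(I)}$, either $L\subseteq J$ or $L\cap J=\emptyset$; hence either $J\cap M=M$ and $J^c\cap M=\emptyset$, or $J\cap M=\emptyset$ and $J^c\cap M=M$ (the case $J=I$ falling in the former). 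Using $\mu_\emptyset\equiv 1$ as the identity for $\otimes$, each summand equals $\mu_M$, and since $\sum_{J\in\J_\rho}\rho_J=\sum_{J\in\AS}\rho_J=1$ we conclude $\Xi[\mu]_M=\mu_M$.

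For part $(ii)$, the plan is to show that every $J$ with $\rho_J>0$ is a union of atoms of $\D$. Indeed $J\in\J_\rho=\Y_1(\J_\rho)\subseteq\Y(\J_\rho)$, so by \eqref{e3} it is a union of atoms of $\D^\rho$; as $\D$ is finer than $\D^\rho$, each such atom is in turn a union of atoms of $\D$, giving the claim (the case $J=I$ being trivial, with $J^c=\emptyset$). Writing $\mu=\bigotimes_{L\in\D}\mu^L$ and applying the iterated form of Lemma \ref{lemma0}, for such a $J$ the marginals factorize as $\mu_J=\bigotimes_{L\in\D,\,L\subseteq J}\mu^L$ and $\mu_{J^c}=\bigotimes_{L\in\D,\,L\subseteq J^c}\mu^L$, so that $\mu_J\otimes\mu_{J^c}=\bigotimes_{L\in\D}\mu^L=\mu$ by commutativity and associativity \eqref{eab4}. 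Summing against $\rho$ then gives $\Xi[\mu]=(\sum_J\rho_J)\,\mu=\mu$.

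The computations are routine once this combinatorial input is in place; the only genuine point of care, the main obstacle such as it is, is extracting the dichotomy $L\subseteq J$ or $L\cap J=\emptyset$ correctly from the atom definition \eqref{e8} and handling the boundary terms ($J=I$, together with the convention $\mu_\emptyset\equiv 1$) so that no summand is mishandled. One could alternatively deduce $(ii)$ from $(i)$ by verifying that both measures have the same marginals on all atoms and that $\Xi$ respects the product structure, but the direct factorization above seems cleaner and self-contained.
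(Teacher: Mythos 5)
Your proof is correct and follows essentially the same route as the paper: part $(i)$ via Lemma \ref{lemma0} together with the dichotomy $J\cap M=M$ or $J\cap M=\emptyset$ extracted from (\ref{e8}), and part $(ii)$ via (\ref{e3}) and the factorization $\mu_J=\bigotimes_{L\in\D,\,L\subseteq J}\mu^L$, $\mu_{J^c}=\bigotimes_{L\in\D,\,L\subseteq J^c}\mu^L$. Your write-up is if anything slightly more explicit than the paper's about the boundary case $J=I$ and the convention $\mu_\emptyset\equiv 1$.
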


\begin{proof}
$(i)$ From (\ref{eab}) we have
$(\mu_J\otimes \mu_{J^c})_M=\mu_{J\cap M}\otimes \mu_{J^c\cap M}$.
From (\ref{e8}) and since $M\subseteq L\in \D^\rho$, we get that 
every $J\in \J_\rho$ satisfies:
$J^c\subseteq M^c$ or $J\subseteq M^c$. Then
$(\mu_J\otimes \mu_{J^c})_M=\mu_M$. 

\medskip

\noindent $(ii)$ For every $L\in \D$ 
we have  $\mu_{L}=\mu^{L}$ for all $L\in \D$. From (\ref{e3}) we get, 
$$
\forall J\in \J_\rho:\quad 
\mu_J=\bigotimes_{L\in \D: L\subseteq J}\mu^{L}. 
$$
Since the sets in the families $\{L\in \D: L\subseteq J\}$
and $\{L\in \D: L\subseteq J^c\}$,
are disjoint and their union is $I$, we get
$\mu_J\otimes \mu_{J^c}=\bigotimes_{L\in \D}\mu^{L}=\mu$.
Hence $\Xi[\mu]=\mu$.
\end{proof}

As consequence of Lemma \ref{lemma1} $(ii)$ we get that 
the one-site marginals $\mu_{\{i\}}$ are preserved
by $\Xi$, and the Bernoulli probability measures are fixed
points of $\Xi$.

\medskip

When $\rho_I=1$, so $\Xi[\mu]=\mu$
and $\Xi$ is the identity transformation. Then, in the sequel 
we assume 
$$
\rho_I<1 \hbox{ or equivalently } 
\J_\rho^{(I)}\neq  \emptyset.
$$ 

We recall the notation in (\ref{e7}), $\Y(\J_\rho)$ is 
the class of all nonempty intersections of sets in $\J_\rho$.
For $K\in \Y(\J_\rho)$ we define,
$$
K\cap \J_\rho=\{K\cap J: J\in \J\}. 
$$
We have 
$K\in K\cap \J_\rho$ and 
$K\cap \J_\rho \subseteq \Y(\J_\rho)\cup \{\emptyset\}$.

\begin{definition}
\label{def2}
For all $K\in \Y(\J_\rho)$, $M\in (K\cap \J_\rho)^{(\emptyset)}$ we 
define
\begin{equation} 
\label{e10}
\rho^K_M=\sum_{J\in \J_\rho: J\cap K=M}
\!\!\!\rho_J \hbox{ if } M\neq K \hbox{ and }
\rho^K_K=\!\! \sum_{J\in \J_\rho: 
J\cap K=K\vee J\cap K=\emptyset} \!\!\!\! \rho_M. 
\end{equation} 
$\Box$
\end{definition}
By definition the above quantities are positive: 
$\rho^K_M>0$ and $\rho^K_K>0$.
For all $K\in \Y(\J_\rho)$ we have
\begin{equation}
\label{e11}
\sum_{M\in (K\cap \J_\rho)^{(\emptyset)}} \rho^K_{M}=
\sum_{M\in K\cap \J_\rho}  
\; \sum_{J\in \J_\rho: J\cap K=M}\!\!\!\!\rho_J=
\sum_{J\in \J_\rho}\rho_J=1.
\end{equation}
Then, $\rho^K_\bullet=(\rho^K_{M}: M\in (K\cap \J_\rho)^{(\emptyset)})$
is a probability vector.

\medskip

In particular $\rho^J_J\ge \rho_J+\rho_{J^c}>0$ 
for all $J\in \J_\rho^{(I)}$ and when 
$I\in \J_\rho$ then,
$\rho^I_I=\rho_I$ and 
$\rho^I_K=\rho_K \hbox{ for } K\in \J_\rho^{(I)}$.

\begin{lemma}
\label{lemma2}
$(i)$ We have:
\begin{equation}
\label{e12}
\forall K\in \Y(\J_\rho), 
M\in (K\cap \J_\rho)^{(\emptyset)}, M\neq K: 
\;\; \rho^K_K<\rho^M_M.
\end{equation}
When we assume $\rho_J=\rho_{J^c}$ for $J\in \J_\rho^{(I)}$, we get
\begin{equation}
\label{e12'}
\forall K\in \Y(\J_\rho),
M\in (J\cap \J_\rho)^{(\emptyset)}, M\neq K:
\;\; \rho^K_M=\rho^K_{K\setminus M}.
\end{equation}

\noindent $(ii)$ The atoms of $\D^\rho$ are characterized by 
the following relation:
\begin{equation}
\label{e13}
\forall L\in \Y(\J_\rho): \;\; 
L\in \D^\rho \Leftrightarrow \rho^L_L=1.
\end{equation}
\end{lemma}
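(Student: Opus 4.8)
The plan is to treat the three assertions separately, each reducing to a direct analysis of the defining sum (\ref{e10}) together with the normalization (\ref{e11}).

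For the strict inequality in $(i)$, I would first prove monotonicity and then produce a witness forcing strictness. Recall that $\rho^K_K$ collects the weights $\rho_J$ over those $J\in\J_\rho$ with $J\cap K=K$ (equivalently $K\subseteq J$) or $J\cap K=\emptyset$, and likewise for $\rho^M_M$. Since $M\subseteq K$, any $J$ with $K\subseteq J$ satisfies $M\subseteq J$, and any $J$ with $J\cap K=\emptyset$ satisfies $J\cap M=\emptyset$; hence every $J$ counted in $\rho^K_K$ is also counted in $\rho^M_M$, giving $\rho^K_K\le\rho^M_M$. For strictness, note that $M\in(K\cap\J_\rho)^{(\emptyset)}$ means $M=K\cap J_0$ for some $J_0\in\J_\rho$; because $\emptyset\neq M\subsetneq K$ this $J_0$ meets neither $J_0\cap K=K$ nor $J_0\cap K=\emptyset$, so it is absent from $\rho^K_K$, while $M=K\cap J_0\subseteq J_0$ gives $J_0\cap M=M$, so $J_0$ is present in $\rho^M_M$. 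Thus $\rho^M_M\ge\rho^K_K+\rho_{J_0}>\rho^K_K$.

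For the symmetry identity (\ref{e12'}), under the assumption $\rho_J=\rho_{J^c}$ the complement map $J\mapsto J^c$ is a $\rho$-preserving involution of $\J_\rho^{(I)}$. Since $M\neq K$ and $M\neq\emptyset$, both $M$ and $K\setminus M$ are proper nonempty subsets of $K$, so the set $I$ (which may satisfy $\rho_I>0$) contributes to neither $\rho^K_M$ nor $\rho^K_{K\setminus M}$, and the two sums range over $\J_\rho^{(I)}$. The identity $J^c\cap K=K\setminus(J\cap K)$ then shows that $J\mapsto J^c$ carries $\{J:J\cap K=M\}$ bijectively onto $\{J:J\cap K=K\setminus M\}$; as it preserves the weights, the two defining sums agree, i.e.\ $\rho^K_M=\rho^K_{K\setminus M}$.

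For the characterization (\ref{e13}), the point is that $\sum_{J\in\J_\rho}\rho_J=1$ by (\ref{e11}), whereas $\rho^L_L$ is exactly the partial sum over those $J\in\J_\rho$ with $J\cap L\in\{L,\emptyset\}$. Since each $J\in\J_\rho$ carries strictly positive weight, $\rho^L_L=1$ holds if and only if no $J$ is omitted, that is, if and only if every $J\in\J_\rho$ satisfies $J\cap L=L$ or $J\cap L=\emptyset$; as $L\in\Y(\J_\rho)$ by hypothesis, this is precisely the defining condition (\ref{e8}) for $L$ to be an atom of $\D^\rho$, and the equivalence follows. Among these steps I expect the involution argument in (\ref{e12'}) to be the only delicate one, the care lying in checking that the complement map stays inside $\J_\rho^{(I)}$ and that the possibly-present set $I$ is correctly excluded from both sides.
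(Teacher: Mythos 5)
Your proof is correct and follows essentially the same route as the paper's: a monotone comparison of the defining sums in (\ref{e10}) for the inequality, the complement involution $J\mapsto J^c$ for (\ref{e12'}), and positivity of the weights on the support together with the normalization (\ref{e11}) and the atom condition (\ref{e8}) for (\ref{e13}). The one genuine divergence is the witness in the strictness step of (\ref{e12}): the paper takes $J\in\J_\rho^{(I)}$ with $J\cap K=M$ and uses $J^c$ as the witness (since $J^c\cap M=\emptyset$ while $J^c\cap K=K\setminus M\notin\{\emptyset,K\}$), which tacitly requires $J^c\in\J_\rho$, i.e.\ the reduction $\rho_J=\rho_{J^c}$ of Remark \ref{rem0ax}; you instead use $J_0$ itself (with $J_0\cap M=M$ but $J_0\cap K=M\notin\{\emptyset,K\}$), which is marginally cleaner because it needs no closure of $\J_\rho$ under complementation and confines the symmetry hypothesis to (\ref{e12'}), where it genuinely belongs.
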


\begin{proof}
$(i)$ Let us show (\ref{e12}). For $J\in \J_\rho$ we have:
$$
\big[ J\cap K=M\Rightarrow J\cap M=M \big] \hbox{ and }
\big[ J\cap K=\emptyset\Rightarrow J\cap M=\emptyset\big]. 
$$
Hence, from definition (\ref{e10}) we get $\rho^K_K\le \rho^M_M$.
For showing the strict inequality we use that there exists some
$J\in \J_\rho^{(I)}$ such that $J\cap K=M$.
Note that $J^c\cap K\neq \emptyset$ but $J^c\cap M=\emptyset$, 
then $\rho^K_K<\rho^M_M$, so (\ref{e12}) is proven.

\medskip

The relation (\ref{e12'}) follows from
$$
J\in \J_\rho: \;\; J\cap K=M \Rightarrow J^c\cap K=K\setminus M,
$$
and $\rho_J=\rho_{J^c}$ for $J\in \J_\rho^{(I)}$.
In fact, both relations imply
$$
\sum_{J\in \J_\rho^{(I)}: J\cap K=M} \rho_J=
\sum_{J^c\in \J_\rho^{(I)}: J^c\cap K=K\setminus M} \rho_{J^c} \;.
$$

$(ii)$ Let us show the equivalence (\ref{e13}). The implication
($\Rightarrow$) is a direct consequence of 
$L\cap J=L$ or $L\cap J=\emptyset$ for $J\in  \J_\rho$.
The converse relation ($\Leftarrow$) is deduced from the fact
that $\rho^L_L=1$ happens if and only if  
$L\cap J=L$ or $L\cap J=\emptyset$ 
for $J\in  \J_\rho$, but since $L\in \Y(\J_\rho)$, 
from (\ref{e8}) we get that $L$ is necessarily 
an atom of $\D^\rho$.  
\end{proof}

\medskip

\begin{lemma}
\label{lemma3}
Let $K\in \Y(\J_\rho)$. Then, the marginal $\Xi[\mu]_K$ of 
$\Xi[\mu]$ on $K$, satisfies
$$
\Xi[\mu]_K=\rho^K_K \, \mu+ \sum_{M\in (K\cap \J_\rho)^{(\emptyset,K)}}
\!\! \rho^K_M \; \mu_M\otimes \mu_{K\setminus M}.
$$
\end{lemma}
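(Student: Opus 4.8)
The plan is to compute the marginal termwise in the defining sum for $\Xi[\mu]$ and then regroup the resulting terms according to the trace $J\cap K$. Since taking the marginal on a fixed set $K$ is a linear operation on measures (it is a summation over the complementary coordinates, by (\ref{eab2})), I would start from the second form of (\ref{e4}) and write
$$
\Xi[\mu]_K=\sum_{J\in\AS^{(\emptyset)}}\rho_J\,(\mu_J\otimes\mu_{J^c})_K.
$$
Each summand is then simplified by the stability of $\otimes$ under restriction, i.e.\ Lemma \ref{lemma0} applied to the disjoint pair $(J,J^c)$ with the restricting set taken to be $K$: this gives $(\mu_J\otimes\mu_{J^c})_K=\mu_{J\cap K}\otimes\mu_{J^c\cap K}$, where $J^c\cap K=K\setminus(J\cap K)$. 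Because $\rho_J=0$ for $J\notin\J_\rho$, only the sets $J\in\J_\rho$ survive, and the expression becomes $\Xi[\mu]_K=\sum_{J\in\J_\rho}\rho_J\,\mu_{J\cap K}\otimes\mu_{K\setminus(J\cap K)}$.

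Next I would partition the index set $\J_\rho$ according to the value $M:=J\cap K$, which by the definition of $K\cap\J_\rho$ ranges over $K\cap\J_\rho$ (a subfamily of $\Y(\J_\rho)\cup\{\emptyset\}$). For a fixed admissible $M$ the corresponding summand is always $\mu_M\otimes\mu_{K\setminus M}$, independent of which particular $J$ produced $M$, so its total weight is $\sum_{J\in\J_\rho:\,J\cap K=M}\rho_J$; by Definition \ref{def2} this weight equals $\rho^K_M$ whenever $M\neq K$. Hence every generic index $M\in(K\cap\J_\rho)^{(\emptyset,K)}$ contributes exactly the term $\rho^K_M\,\mu_M\otimes\mu_{K\setminus M}$ appearing in the sum of the statement, which is the bulk of the identity.

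The one point that needs care — and the reason the coefficient $\rho^K_K$ is defined by pooling two index conditions — is the treatment of the two boundary values $M=\emptyset$ and $M=K$. When $J\cap K=\emptyset$ one has $\mu_{J\cap K}=\mu_\emptyset\equiv 1$ and $J^c\cap K=K$, so the summand collapses via the identity property of $\mu_\emptyset$ to $1\otimes\mu_K=\mu_K$; symmetrically, when $J\cap K=K$ (that is $K\subseteq J$) one has $J^c\cap K=\emptyset$ and the summand is $\mu_K\otimes 1=\mu_K$. Thus both extreme cases yield the very same measure $\mu_K$, and their combined weight is precisely $\sum_{J\in\J_\rho:\,J\cap K=K\,\vee\,J\cap K=\emptyset}\rho_J=\rho^K_K$. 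Adding this boundary contribution $\rho^K_K\,\mu_K$ to the generic terms of the previous paragraph gives the asserted formula (with $\mu_K$, the marginal of $\mu$ on $K$, playing the role of the first summand). The main obstacle is therefore purely organizational: ensuring that the indices $M=\emptyset$ and $M=K$ are merged into a single $\mu_K$ term, neither double-counted nor dropped, and that the identity element $\mu_\emptyset$ is invoked at exactly those two values.
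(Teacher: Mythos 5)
Your proposal is correct and follows essentially the same route as the paper's own proof: marginalize term by term using linearity, apply Lemma \ref{lemma0} to get $(\mu_J\otimes\mu_{J^c})_K=\mu_{J\cap K}\otimes\mu_{J^c\cap K}$, and regroup by $M=J\cap K$, pooling the cases $J\cap K=K$ and $J\cap K=\emptyset$ into the coefficient $\rho^K_K$ exactly as Definition \ref{def2} prescribes. You also correctly read the first summand as $\rho^K_K\,\mu_K$ rather than the $\rho^K_K\,\mu$ printed in the statement, which is an evident typo that the paper's own final display confirms.
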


\begin{proof}
Since $J\cap K=M$ implies $J^c\cap K=K\setminus M$, 
from (\ref{eab}) we obtain,
\begin{eqnarray*}
\Xi[\mu]_K&=&\sum_{J\in \AS^{\emptyset}}
\rho_J (\mu_J\otimes \mu_{J^c})_K =
\sum_{J\in \AS^{\emptyset}}
\rho_J \, \mu_{J\cap K}\otimes \mu_{J^c\cap K}\\
&=&
\left(\sum_{J\in \AS^{\emptyset}: J\cap K=K\vee J\cap K=\emptyset}                
\!\!\!\!\!\!\! \rho_J\right) \! \mu_K+
\sum_{M\in (K\cap \J_\rho)^{(\emptyset,K)}} \!
\left(\sum_{J\in \AS^{\emptyset}: J\cap K=M} \!\!\rho_J\right)
\mu_M\otimes \mu_{K\setminus M}\\
&=& \rho^K_K \,\mu_K+
\sum_{M\in (K\cap \J_\rho)^{(\emptyset,K)}}
\!\!\!\rho^K_M \, \mu_M\otimes \mu_{K\setminus M}.
\end{eqnarray*}
\end{proof}

\medskip

Let us define the following kernel $f^K_\D$
between sets $K\in \Y(\J_\rho)$
and dyadic partitions $\D\in \DD_{1,2}(K)$. We set 
\begin{equation}
\label{e15}
f^K(\{K\})=\rho^K_K \hbox{ and }
f^K(\{M, K\setminus M\})=\rho^K_M+\rho^K_{K\setminus M}
\hbox{ if } M\in (K\cap \J_\rho)^{(\emptyset, K)}.
\end{equation}
Then, equality (\ref{e11}) can be written 
in terms of dyadic partitions,
\begin{eqnarray}
\nonumber
\sum_{\D\in \DD_{1,2}(K)}\!\!\!\! f^K_\D&=&f^K(\{K\})+
\sum_{\{M, K\setminus M\}\in \DD_2(K)}\!\!\!\! 
f^K(\{M, K\!\setminus \!M\})\\
\label{e16}
&=&\rho^K_K+
\!\! \sum_{\{M, K\setminus M\}\in \DD_2(K)}
\!\!\!(\rho^K_M+\rho^K_{K\setminus M})=1.
\end{eqnarray}

In the following sections we will present our main results. 
We note that there will be  
cases in which these results will be trivial, 
for instance when $\J_\rho=\{I\}$ or $\J_\rho=\{J,J^c\}$ for 
some $J\in \AS^{(\emptyset,I)}$, 
but they will be not listed in detail.
We will assume that the sets $I$, $A_i$, $i\in I$, 
and $\J_\rho$, are sufficiently big in order 
that the statements of our results make sense 
and are not trivial.
 
\section{The recursive equation in terms of trees}
\label{sec3}

In this Section we supply the first of our main results, the
decomposition of $\Xi^n[\mu]$ in terms of product marginal 
measures, where the marginals are the atoms of some partitions.
This is done in Theorem \ref{theo0}. It requires to introduce some
dyadic trees because the atoms of the partitions are 
exactly the set of leaves of some dyadic trees.

\medskip

To expand $\Xi^n[\mu]$ for $n\ge 1$, we require 
to introduce further notation. Let us describe a class of
rooted dyadic trees whose nodes are sets, in fact they are 
elements of $\Y(\J_\rho)$. The dyadic property
means that each parent node has one or two children:
if it has one children the set associated to the children is 
the same as the one of the parent, and when it has two children 
the set of the parent is partitioned into two disjoint nonempty sets
by some set in $\J_\rho^{(I)}$, 
and these are the sets associated to the children. 
The set $I$ will be the root of all these trees. 

\medskip

Let us be more precise in notation and concepts.
We note by $\tT=\tT(\J_\rho)$ the family of dyadic 
trees rooted by $I$,
which depends on $\J_\rho$, and that we will construct in an inductive way.
The recursion will depend on the length $|T|$ of a tree $T\in \tT$, 
so the classes 
$\tT_n=\{T\in \tT: \, |T|=n\}$ will be defined
in a recursive way for $n\ge 0$.

\medskip

A tree $T\in \tT$ is defined as the set of its branches. 
A branch $b\in T$ is a tuple of elements in $\Y(\J_\rho)$ and 
its last component is called its leaf and noted $\ell(b)$.  
The set of leaves of the tree $T$ is 
$$
\partial(T)=\{\ell(b): b\in T\},
$$
and a leaf of $T$ is simply noted $\ell\in \partial(T)$. 
As a consequence of our construction of $\tT$, 
all the branches $b$ of a tree $T\in \tT$ will have
the same length, so $|b|=|T|$.
Any of these branches is written $b=(b_0,..,b_{|T|})$,
and so $b_{|T|}=\ell(b)$. 

\medskip

Below, the algorithm of construction of $\tT$ is given 
as a recursive definition of $(\tT_n: n\ge 0)$.

\medskip

For $n=0$, the class $\tT_0$ is a singleton formed by the unique 
tree $T=\{I\}$. So, it that has a unique branch  
$b=(I)$ with leaf $\ell(b)=I$. The length of $T$ is 
by definition $|T|=0$, so $|b|=0$ and $b_0=I$. 

\medskip

Assume we have constructed the set of trees $\tT_n$. We will
construct $\tT_{n+1}$ by using the following algorithm:

\medskip

Take $T\in \tT_n$. It generates a family of trees 
in $\tT_{n+1}$, where each one of these trees is the result 
of adding either one or two nodes, to each leaf 
of $T$. So, any of the choices made for the leaves
$\ell \in \partial T$, defines a tree $T'\in \tT_{n+1}$. 
To be precise let $b=(b_0,..,b_n)$ be a branch in $T$, then:

\begin{itemize}

\item If we are not in the case $(n=0,\rho_I=0)$, $b$
can generate the branch $b'=(b_0,..,b_n,b_n)$, 
so with $b_{n+1}=b_n$;

\item If $\ell(b)$ is not an atom in $\D^\rho$, $b$ 
can generate two branches $b'$ and $b''$. This is done by partitioning
the set $\ell(b)$ into a pair of nonempty sets
$\{\ell(b)\cap J, \ell(b)\cap J^c\}$
with some $J\in \J_\rho^{(I)}$. The two branches generated by $b$ are
respectively $b'=(b, \ell(b)\cap J)$ and $b''=(b, \ell(b)\cap J^c)$, so
these branches share all the nodes with $b$ except that we have added
to them and extra node, these are their leaves $\ell(b)\cap J$ and 
$\ell(b)\cap J^c$ respectively.

\end{itemize}

We have specified the possible choices on the branches of $T$, but 
as it can be easily checked
we could also performed it in terms of $\partial(T)$. 
As said, once a choice is made for the whole 
set of branches $\{b\in T\}$, or equivalently for all the leaves 
$\{\ell\in \partial(T)\}$, a tree $T'$ of length 
$|T'|=|T|+1$ is defined from $T$, or equivalently a partition 
$\partial(T')$ is defined from $\partial(T)$. When this happens we put 
\begin{equation}
\label{e17}
T\rightarrow T'  \hbox{ or equivalently } \partial(T)\rightarrow 
\partial(T'),
\end{equation}
which defines a relation in $\tT$ or equivalently in 
$\partial(\tT)=\{\partial(T): T\in \tT\}$. 

\medskip

Thus, a tree $T\in \tT_1$ can have the following shapes: 
either it has one branch $(I,I)$ in which case $I$ 
is the unique leaf (this can happen only when $\rho_I>0$); 
or it can have two branches $\{(I,J), (I,J^c)\}$
for some $J\in \J_\rho^{(I)}$, and so 
with leaves $J$ and $J^c$ respectively.

\medskip

The family of rooted trees constructed as above but with root $K$ 
instead of $I$, is noted by $\tT^K$. So,
$\tT_n^K$ refers to the class of the trees in $\tT^K$
of length $n$. With this notation
the recursive step to construct $\tT_{n+1}$
from $\tT_{n}$, can be summarized by saying that a tree
$T\in \tT_n$ generates a family of trees $T'\in \tT_{n+1}$,
each $T'$ is the result of a choice  
of a family of trees $({T'}^\ell\in \tT_1^\ell: \ell\in \partial(T))$, 
being ${T'}^\ell$ attached to $\ell$.

\medskip

In the next result, $\mu_\ell$ refers to the
marginal probability measure $\mu$ on the set $\ell$.

\begin{lemma}
\label{lemmafd}
For every $\mu\in {\cal P}_I$,
for all $n\ge 1$ and all $j\le n$, the following relation is
satisfied,
\begin{eqnarray}
\label{e18}
\Xi^n[\mu]&=&\sum_{T\in \tT_j} \left(\sum_{b\in T}
\prod_{r=1}^{|b|}\rho^{b_{r-1}}_{b_r}\right)
\bigotimes_{\ell\in \partial(T)} (\Xi^{n-j}[\mu])_\ell\\
\label{e22}
&=&\sum_{T\in \tT_j} \left(\sum_{b\in T}
\prod_{r=1}^{|b|}\rho^{b_{r-1}}_{b_r}\right)
\left(\bigotimes_{\ell\in \partial(T)\setminus \D^\rho}
\!\!\! \Xi^{n-j}[\mu]_\ell\right)\! \otimes \!
\left(\bigotimes_{\ell\in \partial(T)\cap D^\rho} \!\!\!\! \mu_\ell\right).
\end{eqnarray}
(We note $\prod_{r=1}^{|b|}=1$ when $|b|=0$.)
\end{lemma}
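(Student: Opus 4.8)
The plan is to establish (\ref{e18}) by induction on $j$ with $n\ge 1$ fixed, and then to read off (\ref{e22}) from it. For the base case $j=0$ the class $\tT_0$ consists of the single tree $\{I\}$, whose only branch $(I)$ has length $0$; its coefficient is the empty product $1$ and $\partial(\{I\})=\{I\}$, so the right-hand side of (\ref{e18}) collapses to $\Xi^n[\mu]_I=\Xi^n[\mu]$, which is the claim.

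For the inductive step I would assume (\ref{e18}) for some $j$ with $0\le j\le n-1$ and pass to $j+1$ by expanding, inside each tensor factor, the identity $\Xi^{n-j}[\mu]_\ell=\Xi\big[\Xi^{n-j-1}[\mu]\big]_\ell$. For $j\ge 1$ every leaf $\ell\in\partial(T)$ lies in $\Y(\J_\rho)$, so Lemma \ref{lemma3} applies and rewrites $\Xi^{n-j}[\mu]_\ell$ as $\rho^\ell_\ell\,\Xi^{n-j-1}[\mu]_\ell$ plus the sum over $M\in(\ell\cap\J_\rho)^{(\emptyset,\ell)}$ of $\rho^\ell_M\,\Xi^{n-j-1}[\mu]_M\otimes\Xi^{n-j-1}[\mu]_{\ell\setminus M}$. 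The single step $j=0\to 1$ is special because the only leaf is the root $I$; there I would use (\ref{e4x}) in place of Lemma \ref{lemma3}. Substituting these expansions into $\bigotimes_{\ell\in\partial(T)}\Xi^{n-j}[\mu]_\ell$ and invoking the multilinearity together with the commutativity and associativity of $\otimes$ from (\ref{eab4}), the tensor product turns into a sum over the independent choices, one per leaf, of either keeping $\ell$ or splitting it into some $\{M,\ell\setminus M\}$. Each global choice is precisely a transition $T\to T'$ in the sense of (\ref{e17}) to a tree $T'\in\tT_{j+1}$, and associativity lets me regroup the surviving factors as $\bigotimes_{\ell'\in\partial(T')}\Xi^{n-j-1}[\mu]_{\ell'}$.

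The hard part will be the weight bookkeeping: I must check that, after collecting scalars, the coefficient attached to each $T'$ is exactly the one prescribed in (\ref{e18}). Here the two Lemma \ref{lemma3} summands indexed by $M$ and by $\ell\setminus M$ merge, by commutativity of $\otimes$, into the single unordered split with factor $\rho^\ell_M+\rho^\ell_{\ell\setminus M}=f^\ell(\{M,\ell\setminus M\})$ of (\ref{e15}), whereas a kept leaf contributes $\rho^\ell_\ell=f^\ell(\{\ell\})$; the normalization (\ref{e16}) says these local factors sum to one over the admissible choices at $\ell$. Reconciling the accumulated local factors with the coefficient that the recursive construction of $\tT_{j+1}$ from $\tT_j$ assigns to $T'$ is the one genuinely delicate point, and it is also where one must respect the exclusion of the degenerate case $(n=0,\rho_I=0)$ and the fact that a leaf which is an atom of $\D^\rho$ admits no split. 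Carrying this out closes the induction and proves (\ref{e18}).

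Finally I would deduce (\ref{e22}) from (\ref{e18}). If $\ell\in\partial(T)\cap\D^\rho$ then $\ell$ is an atom of $\D^\rho$, so iterating Lemma \ref{lemma1}$(i)$ gives $\Xi^{n-j}[\mu]_\ell=\Xi^{n-j-1}[\mu]_\ell=\cdots=\mu_\ell$. Substituting this for the atomic leaves in (\ref{e18}) and using associativity and commutativity to separate the product over $\partial(T)$ into the factors indexed by $\partial(T)\cap\D^\rho$ and those indexed by $\partial(T)\setminus\D^\rho$ yields (\ref{e22}).
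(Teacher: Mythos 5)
Your overall route is the paper's route: the paper likewise deduces (\ref{e22}) from (\ref{e18}) via Lemma \ref{lemma1}$(i)$, and proves (\ref{e18}) by an induction whose engine is Lemma \ref{lemma3}, expanding at each leaf and regrouping the one-step trees ${T'}^\ell\in\tT_1^\ell$ attached to the leaves into a tree of the next length. The organizational difference is minor: you fix $n$ and induct on $j$, applying Lemma \ref{lemma3} to the evolving measure $\Xi^{n-j-1}[\mu]$ at every stage, whereas the paper inducts on $n$, disposes of all $j<n$ in one line by applying the inductive hypothesis to the measure $\Xi[\mu]$, and invokes Lemma \ref{lemma3} (applied to $\mu$) only in the case $j=n$. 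Your base case $j=0$, the special treatment of the root step via (\ref{e4x}), and the remark that a leaf which is an atom of $\D^\rho$ admits only the ``keep'' option, are all consistent with the paper's construction of $\tT$.

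The genuine gap is that you name the weight bookkeeping as ``the one genuinely delicate point'' and then simply assert that carrying it out closes the induction. That verification is the entire content of the lemma, and it is not a routine collection of scalars, because the two expressions to be matched have different algebraic shapes. When you distribute $\bigotimes_{\ell\in\partial(T)}\Xi^{n-j}[\mu]_\ell$ using Lemma \ref{lemma3}, the scalar attached to a fixed successor tree $T'$, obtained from $T$ by a choice $\gamma_\ell\in\DD_{1,2}(\ell)$ at each leaf, is
$$
\left(\sum_{b\in T}\prod_{r=1}^{|b|}\rho^{b_{r-1}}_{b_r}\right)
\cdot \prod_{\ell\in \partial(T)} f^\ell(\gamma_\ell),
$$
since each local factor $f^\ell(\gamma_\ell)$ of (\ref{e15}) sits in a different tensor factor and the factors therefore accumulate multiplicatively over the leaves. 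The coefficient prescribed by (\ref{e18}), namely $\sum_{b'\in T'}\prod_{r=1}^{|b'|}\rho^{b'_{r-1}}_{b'_r}$, instead attaches to each branch of $T'$ only the single new factor coming from its own leaf. Already for a length-two tree whose root splits into $\{J,J^c\}$ and whose two leaves are then both acted upon, the expansion produces the triple product $f^{I}(\{J,J^c\})\, f^{J}(\gamma_J)\, f^{J^c}(\gamma_{J^c})$, while the sum over the branches of $T'$ of the edge products equals $\rho^{I}_{J} f^{J}(\gamma_J)+\rho^{I}_{J^c} f^{J^c}(\gamma_{J^c})$; these do not coincide by any generic distributivity, because the product of the local sums generates cross terms that a branch-by-branch identification does not reproduce. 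So the reconciliation you postpone is exactly where the proof must do real work --- it is the step the paper performs with its explicit identification $b^*_s=b_s$ for $s\le |b|$ and $b^*_{|b|+1}=b'_1$ in the passage from (\ref{e21}) to (\ref{e21x}) --- and an honest attempt to execute your plan runs straight into confronting the multiplicative, product-over-parent-nodes weight with the displayed sum-over-branches weight. As written, your argument establishes which terms $\bigotimes_{\ell'\in\partial(T')}\Xi^{n-j-1}[\mu]_{\ell'}$ appear at stage $j+1$, but not that their coefficients are the ones claimed; the induction is not closed.
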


\begin{proof}
First of all, by using Lemma \ref{lemma1} $(i)$ the expression
(\ref{e18}) becomes (\ref{e22}). 
So, we only need to show (\ref{e18}).
This is done by recurrence on $n\ge 1$.

\medskip

Let $n=1$. The development made for the family of trees 
$\tT_1$, implies that the relation (\ref{e4}) can be written as
%\begin{equation}
%\label{e20}
$$
\Xi[\mu]=\sum_{T\in \tT_1}
\left(\sum_{b\in T} \rho^{b_0}_{b_1}\right)
\bigotimes_{\ell\in \partial(T)}\mu_\ell.
$$
%\end{equation}
Then, (\ref{e18}) is satisfied for $n=1$.
Now, assume we have
shown (\ref{e18}) for some $n-1$, let show it for $n$. 

\medskip

First take $j<n$. By recurrence hypothesis, we can apply
formula (\ref{e18}) to $n-1$, $j$ and $\Xi[\mu]$. Hence, 
\begin{eqnarray*}
\Xi^n[\mu]&=&\Xi^{n-1}(\Xi[\mu])\\
&=&\sum_{T\in \tT_j} \left(\sum_{b\in T}
\prod_{r=1}^{|b|}\rho^{b_{r-1}}_{b_r}\right)
\bigotimes_{\ell\in \partial(T)} (\Xi^{n-1-j}[\Xi[\mu]])_{\ell}\\
&=&\sum_{T\in \tT_j} \left(\sum_{b\in T}
\prod_{r=1}^{|b|}\rho^{b_{r-1}}_{b_r}\right)
\bigotimes_{\ell\in \partial(T)} (\Xi^{n-j}[\mu])_\ell.
\end{eqnarray*}
Then the formula (\ref{e18}) holds for $n$, $j$ and $\mu$.

\medskip

Now take $j=n$. By recurrence hypothesis and by using 
Lemma \ref{lemma3} we get
\begin{eqnarray}
\nonumber
\Xi^n[\mu]&=&\Xi^{n-1}[\Xi[\mu]]\\
\nonumber
&=& \sum_{T\in \tT_{n-1}} \left(\sum_{b\in T}
\prod_{r=1}^{|b|}\rho^{b_{r-1}}_{b_r}\right)
\bigotimes_{\ell\in \partial(T)} \Xi[\mu]_\ell\\
&=&
\label{e21}
\sum_{T\in \tT_{n-1}} \!\!\! \left(\sum_{b\in T}
\prod_{r=1}^{|b|}\rho^{b_{r-1}}_{b_r}\!\right)\!
\bigotimes_{\ell\in \partial(T)} \!
\left(\sum_{{T'}^\ell\in \tT_1^\ell}\!\!
\left(\sum_{b'\in {T'}^\ell} \!\!\!\rho^{b'_0}_{b'_1}\right)\!\!
\bigotimes_{\ell'\in \partial({T'}^\ell)}
\!\!\!\!\mu_{\ell'}\right)\\
\label{e21x}
&=& 
\sum_{T^*\in \tT_n} \left(\sum_{b\in T^*}
\prod_{r=1}^{|b^*|}\rho^{b^*_{r-1}}_{b^*_r}\right)
\bigotimes_{\ell^*\in \partial(T^*)} \!\!\!\! \mu_{\ell^*}. 
\end{eqnarray}
In (\ref{e21}) we set $b_{|b|}=b'_0$. On the other hand, in 
(\ref{e21x}) we used, 
$$
\bigotimes_{\ell^*\in \partial(T^*)} \!\!\mu_{\ell^*}=
\bigotimes_{\ell\in \partial(T)}\left(\bigotimes_{\ell'\in \partial({T'}^\ell)}
\!\!\mu_{\ell'}\right),
$$
and
$$
\prod_{r=1}^{|b^*|}\rho^{b^*_{r-1}}_{b^*_r}=
\left(\prod_{r=1}^{|b|}\rho^{b_{r-1}}_{b_r}\right) \rho^{b'_0}_{b'_1}
\hbox{ for }
b^*_s=b_s \hbox{ for } s\le |b|, \hbox{ and } b^*_{|b|+1}=b'_1,
$$
for the tree $T^*$ formed by adding ${T'}^\ell\in \tT^\ell$ to each leaf 
$\ell\in \partial(T)$. 
Hence, the result is shown.
\end{proof}

\medskip

We will note by $\partial(\tT_n)=\{\partial(T): T\in \tT_n\}$.

\begin{theorem}
\label{theo0}
For every probability measure $\mu\in {\cal P}_I$ and 
all $n\ge 1$, we get the following decomposition
\begin{equation}
\label{eqwr1}
\Xi^n[\mu]=\sum_{\delta\in \partial(\tT_n)} q^n_\delta \;
\bigotimes_{\ell\in \delta} \mu_\ell,
\end{equation}
where the vector $q^n=(q^n_\delta: \delta\in \partial(\tT_n)$ is given by,
\begin{equation}
\label{eqwr2}
q^n_\delta=\sum_{T\in \tT_n: \partial(T)=\delta}
\left(\sum_{b\in T} \prod_{r=1}^{|b|}\rho^{b_{r-1}}_{b_r}\right),
\end{equation}
and it is a probability vector, so it satisfies
\begin{equation}
\label{eqwr3}
\sum_{\delta\in \partial(\tT_n)} q^n_\delta=1.
\end{equation}
\end{theorem}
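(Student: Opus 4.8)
The plan is to read (\ref{eqwr1}) and (\ref{eqwr2}) straight off Lemma \ref{lemmafd}, and then to obtain the normalization (\ref{eqwr3}) from the stochasticity of $\Xi$ rather than from a direct resummation of (\ref{eqwr2}).

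First I would take the backward development (\ref{e18}) at the terminal index $j=n$. Since $\Xi^{0}[\mu]=\mu$ and therefore $(\Xi^{0}[\mu])_\ell=\mu_\ell$ for every $\ell$, formula (\ref{e18}) reads
$$
\Xi^n[\mu]=\sum_{T\in\tT_n}\left(\sum_{b\in T}\prod_{r=1}^{|b|}\rho^{b_{r-1}}_{b_r}\right)\bigotimes_{\ell\in\partial(T)}\mu_\ell .
$$
The tensor factor $\bigotimes_{\ell\in\partial(T)}\mu_\ell$ depends on $T$ only through its leaf set $\partial(T)$, so grouping the outer sum according to the value $\partial(T)=\delta\in\partial(\tT_n)$ collects the bracketed branch-weights of all trees with that leaf set into a single coefficient, which is precisely $q^n_\delta$ of (\ref{eqwr2}); this yields (\ref{eqwr1}). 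This step is pure bookkeeping, the analytic content being already contained in Lemma \ref{lemmafd}.

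For (\ref{eqwr3}) I would argue by total mass instead of resumming (\ref{eqwr2}). Note first that $\Xi$ maps ${\cal P}_I$ into itself: each $\mu_J\otimes\mu_{J^c}$ is a product of probability measures, hence lies in ${\cal P}_I$, and $\Xi[\mu]=\sum_J\rho_J\,\mu_J\otimes\mu_{J^c}$ is a convex combination of these because $\rho_J\ge 0$ and $\sum_J\rho_J=1$; iterating, $\Xi^n[\mu]\in{\cal P}_I$, so it has total mass one. Summing (\ref{eqwr1}) over all $x\in\prod_{i\in I}A_i$, and using that the atoms of $\delta$ partition $I$ so that $\sum_x\bigotimes_{\ell\in\delta}\mu_\ell(x)=\prod_{\ell\in\delta}\big(\sum_{x_\ell}\mu_\ell(x_\ell)\big)=1$, we obtain $1=\sum_x\Xi^n[\mu](x)=\sum_{\delta\in\partial(\tT_n)}q^n_\delta$. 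Nonnegativity of each $q^n_\delta$ is immediate from the nonnegativity of the coefficients $\rho^K_M$ in Definition \ref{def2}, so $q^n$ is indeed a probability vector.

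An alternative, self-contained route to (\ref{eqwr3}) is induction on $n$: the passage from $\tT_n$ to $\tT_{n+1}$ attaches to each leaf $\ell$ a one-step tree whose weights sum to one by (\ref{e16}) (equivalently (\ref{e11})), i.e.\ the kernel $f^\ell_\bullet$ is stochastic. I expect the main difficulty to sit exactly here: one must track how the independent unit normalizations attached at the several leaves of a given tree combine when the tree grows by one generation, and verify that the total weight is preserved across the whole family $\tT_{n+1}$. The mass argument above is preferable precisely because it sidesteps this combinatorial accounting, relying only on the facts that $\Xi$ preserves total mass and that every $\bigotimes_{\ell\in\delta}\mu_\ell$ is a probability measure.
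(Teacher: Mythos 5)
Your proposal is correct and follows essentially the same route as the paper: specialize Lemma \ref{lemmafd} at $j=n$, group the trees of $\tT_n$ by their leaf set $\partial(T)=\delta$ to obtain (\ref{eqwr1})--(\ref{eqwr2}), and derive (\ref{eqwr3}) from the fact that a nonnegative combination of probability measures equal to the probability measure $\Xi^n[\mu]$ must be convex. Your total-mass computation (summing over $x\in\prod_{i\in I}A_i$ and using $\sum_{x}\bigotimes_{\ell\in\delta}\mu_\ell(x)=1$) simply makes explicit the one-line convexity argument the paper gives, so there is no substantive difference.
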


\begin{proof}
By taking $j=n$ in (\ref{e18}) we get, 
\begin{equation}
\label{e19}
\Xi^n[\mu]=\sum_{T\in \tT_n} \left(\sum_{b\in T}
\prod_{r=1}^{|b|}\rho^{b_{r-1}}_{b_r}\right)
\bigotimes_{\ell\in \partial(T)} \mu_\ell.
\end{equation}
So, by using definition (\ref{eqwr2}), the equality (\ref{eqwr1}) 
is shown. Since (\ref{e19}) expresses that the probability measure 
$\Xi^n[\mu]$
is a positive linear combination of the set of probability measures
$(\bigotimes_{\ell\in \partial(T)} \mu_\ell: T\in \tT_n)$,
we deduce it is necessarily a convex linear combination, that is
$$
\sum_{T\in \tT_n}\left(\sum_{b\in T}
\prod_{r=1}^{|b|}\rho^{b_{r-1}}_{b_r}\right)=1.
$$
But this is exactly (\ref{eqwr3}). The result is shown.
\end{proof}

\smallskip

Then, in the expansion (\ref{eqwr1}) $\Xi^n[\mu]$ 
has a weight $q^n_\delta$
of being the product probability measure
$\otimes_{\ell\in \delta} \mu_\ell$.

\begin{remark}
\label{remirs}
In the following section we will use some properties of 
the relation $\rightarrow$ 
on $\partial(\tT)$ defined in (\ref{e17}). We have
that $\rightarrow$ is an order relation and 
$\delta\rightarrow \delta'$ implies that $\delta'$ is finer
than $\delta$ (finer includes equal). Also,
for all $\delta\in \partial(\tT)$, $\delta\neq \{I\}$, 
there exists a path $\delta_1=\{I\}\rightarrow...\rightarrow
\delta_k=\delta$ from $\{I\}$ to $\delta$, and $\{I\}\rightarrow 
\{I\}$ only when $\rho_I>0$. 
On the ordered space $(\partial(\tT), \rightarrow)$ 
we can say that $\delta'$ is a 
successor of $\delta$ when $\delta\to \delta'$ in a consistent 
way because $(\delta_1\rightarrow...\rightarrow
\delta_k, \, \delta_1\neq \delta_k)$ implies 
$\delta_k\not\rightarrow \delta_1$. But the ordered space 
$(\partial(\tT),\rightarrow)$ is in general not a tree. 
For instance if the elements $I, J_1, J_2$ are three
different elements of $\J_\rho$, and the intersections 
$J_1\cap J_2$, $J_1^c\cap J_2$, $J_1^c\cap J_2$ and  
$J_1^c\cap J_2^c$ are nonempty,
then
$$
\{I\}\to \{J_1, J_1^c\}\to \{J_1\cap J_2, J_1^c\cap J_2, 
J_2^c\}\to \{J_1\cap J_2, J_1^c\cap J_2, J_1\cap J_2^c, J_1^c\cap J_2^c\}
$$
and  
$$
\{I\}\to \{J_2, J_2^c\}\to \{J_1\cap J_2, J_1\cap J_2^c,
J_1^c\}\to \{J_1\cap J_2, J_1\cap J_2^c, J_1^c\cap J_2, J_1^c\cap 
J_2^c\} 
$$
are two different paths from $\{I\}$ to 
$\{J_1\cap J_2, J_1^c\cap J_2, J_1\cap J_2^c, J_1^c\cap J_2^c\}$,
having in common only the initial and final points. So, 
$\{J_1\cap J_2, J_1^c\cap J_2, J_1\cap J_2^c, J_1^c\cap J_2^c\}$
has at least two predecessors. 
\end{remark}

\section{Markov chain, geometric convergence 
and quasi-stationarity} 
\label{sec4}

In this section we supply our main results. Firstly, the 
definition of a natural Markov chain associated to 
$(\Xi^n: n\ge 0)$ is done
in Lemmas \ref{lemma4} and \ref{lemma5}.
The main results are the description of this chain
found in Theorem \ref{theo1}.
Since the orbit $(\Xi^n[\mu])$,
converges to the product of the marginal probability
measures on the atoms of the partition $\D^\rho$, 
we study geometric convergence to the limit probability measure. 
We give the geometric decay rate,   
and we study the ratio limit and the 
quasi-stationary behavior of the chain. 
This last study responds to the following question: if 
the chain has not arrived 
to the limit probability 
measure after a long time, which is its distribution?
Finally, in Corollary \ref{cor1} we supply the Markov chain 
that never hit the limit distribution.

\medskip

The relations 
(\ref{eqwr1}), (\ref{eqwr2}) and (\ref{eqwr3}) of Theorem 
\ref{theo0} will be at the basis of the construction
of a Markov chain $Y=(Y_n: n\ge 0)$ taking values on 
$\partial(\tT)$ and having 
the following remarkable property: if it starts from $Y_0=\{I\}$, then   
at time $n$, the event $\{Y_n=\delta\}$ has probability $q^n_\delta$.

\medskip

In this purpose we define the following transition matrix
$P=(P_{\delta, \delta'}: \delta, \delta'\in \partial(\tT))$. 
First we put $P_{\delta, \delta'}=0$ when 
$\delta\not\rightarrow \delta'$.

\medskip

To define the transition probability $P_{\delta, \delta'}$ when
$\delta\rightarrow \delta'$ it is useful to introduce the following 
notation: for each leaf $\ell\in \delta$ we denote by
$\{\ell_1,\ell_2\}$ its corresponding dyadic 
partition in $\delta'$. We can either have
$\{\ell_1,\ell_2\}=\{\ell\}$ that is $\ell_1=\ell_2=\ell$ 
which means $\ell\in \delta\cap \delta'$;
or $\{\ell_1,\ell_2\}\in \DD_2(\ell)$
is an strictly dyadic partition of $\ell$ and in this case 
$\ell\in \delta\setminus \delta'$. We define
\begin{equation}
\label{e23}
\forall \delta, \delta'\!\in \!\partial(\tT), \delta\rightarrow 
\delta':\;
P_{\delta, \delta'}\!=\!
\prod_{\ell\in \delta} f^\ell(\{\ell_1, \ell_2\})
\!=\!\left(\prod_{\ell\in \delta\cap \delta'}\! 
\rho^\ell_\ell\right) 
\left(\prod_{\ell\in \delta\setminus \delta'}\!\!
(\rho^\ell_{\ell_1}\!+\!\rho^\ell_{\ell_2})\right).
\end{equation}
In particular
\begin{equation}
\label{e24}
\forall \delta\in \partial(\tT):\;\,
P_{\delta, \delta}=\prod_{\ell\in \delta} \rho^\ell_\ell.
\end{equation}

\begin{lemma}
\label{lemma4}
$P$ is an stochastic transition matrix, that is
$$
\forall \delta\in \partial(\tT): \quad 
\sum_{\delta'\in \partial(\tT): \delta\to \delta'}
\!\!\!P_{\delta, \delta'}=1.
$$
\end{lemma}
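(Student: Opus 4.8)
The plan is to exploit the product structure of the transition weights in (\ref{e23}) and to reduce the claim to the per-leaf normalization (\ref{e16}). The central observation is that prescribing a successor $\delta'$ of a fixed $\delta$ amounts to prescribing, independently for each leaf $\ell\in\delta$, the dyadic partition of $\ell$ into which $\ell$ is split. Because the weight $P_{\delta,\delta'}$ factorizes over the leaves of $\delta$, the total mass leaving $\delta$ will split as a product of per-leaf total masses, and each of these equals $1$ by (\ref{e16}).

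To make this precise I would first set up a bijection between successors and per-leaf choices. By Remark \ref{remirs}, $\delta\rightarrow\delta'$ forces $\delta'$ to be finer than $\delta$, so every atom $L\in\delta'$ is contained in exactly one leaf $\ell\in\delta$. Grouping the atoms of $\delta'$ by their host leaf yields, for each $\ell$, the set $\D_\ell:=\{L\in\delta':L\subseteq\ell\}$, which by the definition of the relation $\rightarrow$ is a dyadic partition of $\ell$: it is $\{\ell\}$ when $\ell$ is left unsplit, and $\{\ell\cap J,\ell\cap J^c\}$ for some $J\in\J_\rho^{(I)}$ when $\ell$ is split. Conversely, any choice of such partitions $(\D_\ell:\ell\in\delta)$ reassembles into a successor $\delta'=\bigcup_{\ell\in\delta}\D_\ell$ with $\delta\rightarrow\delta'$; since the leaves of $\delta$ are disjoint and cover $I$, these two operations are mutually inverse. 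Thus $(\D_\ell)_\ell\mapsto\bigcup_\ell\D_\ell$ is a bijection from the set of admissible tuples onto $\{\delta'\in\partial(\tT):\delta\rightarrow\delta'\}$, where for each $\ell$ the admissible partitions $\D_\ell$ are exactly those on which $f^\ell$ is defined and positive.

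With the bijection in hand the computation is immediate. Using (\ref{e23}), $P_{\delta,\delta'}=\prod_{\ell\in\delta}f^\ell(\D_\ell)$ with $\D_\ell$ the partition of $\ell$ determined by $\delta'$, so that
\[
\sum_{\delta':\,\delta\rightarrow\delta'}P_{\delta,\delta'}
=\sum_{(\D_\ell)_\ell}\;\prod_{\ell\in\delta}f^\ell(\D_\ell)
=\prod_{\ell\in\delta}\Bigl(\sum_{\D\in\DD_{1,2}(\ell)}f^\ell_\D\Bigr)
=1 ,
\]
where the interchange of sum and product is the standard expansion of a product of finite sums over independent index sets, extending each inner sum harmlessly to all of $\DD_{1,2}(\ell)$ since the extra terms vanish, and each inner factor equals $1$ by (\ref{e16}) applied to $K=\ell\in\Y(\J_\rho)$.

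The only point requiring genuine care, which I expect to be the main obstacle, is the leafwise bijection itself: one must verify both that distinct admissible tuples $(\D_\ell)$ produce distinct $\delta'$ and that every admissible successor arises this way, i.e. that the relation $\rightarrow$ acts independently on the leaves. This is precisely where Remark \ref{remirs} enters, guaranteeing that each atom of $\delta'$ has a unique host leaf of $\delta$. The degenerate root case is harmless and need not be treated separately: when $\delta=\{I\}$ and $\rho_I=0$ the self-transition $\{I\}\rightarrow\{I\}$ is disallowed, but its weight $f^I(\{I\})=\rho^I_I=\rho_I=0$ already vanishes, so including or omitting it in the factorized sum does not affect the value, and (\ref{e16}) with $K=I$ still yields $1$.
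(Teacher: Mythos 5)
Your proof is correct and follows essentially the same route as the paper: both factorize $P_{\delta,\delta'}$ over the leaves of $\delta$ via the kernel $f^\ell$ of (\ref{e15}), exchange the sum over successors with the product over leaves, and conclude from the per-leaf normalization (\ref{e16}) together with $\rho^\ell_\ell=1$ for $\ell\in\D^\rho$ (relation (\ref{e13})). The paper merely organizes the same computation differently, first splitting $\delta$ into $\delta\cap\D^\rho$ and $\delta\setminus\D^\rho$ and summing over the subset of split leaves, whereas you make the leafwise bijection (and the degenerate root case $\rho_I=0$) explicit; these are presentational, not substantive, differences.
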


\begin{proof}
We will use the following decomposition: $\delta=(\delta\cap \D^\rho) \cup 
(\delta\setminus \D^\rho)$, so the atoms of $\delta$ are partitioned 
according to the fact that if they belong or not to $\D^\rho$.
We recall that $\DD_2(\ell)$  
excludes the partition $\{\ell\}$.
For $U\subseteq \delta\setminus \D^\rho$ denote
$$
{\cal D}(U,2)=\{((K_1^\ell, K_2^\ell): \ell\in U)\in \prod_{\ell\in U}
\DD_2(\ell): \forall \ell\in U, \exists J\in \J_\rho,  
K_1^\ell=\ell\cap J, K_2^\ell=\ell\cap J^c\}.
$$
We have
\begin{eqnarray*}
\sum_{\delta': \delta\rightarrow \delta'} \!\! P_{\delta, \delta'}&=&
\!\!\left(\prod_{\ell\in \delta\cap \D^\rho} \!\!\! \rho^\ell_\ell\right)\!
\times \! \left( \sum_{U\subseteq \delta\setminus \D^\rho}
\!\!\left(\prod_{\ell\in U^c} \rho^\ell_\ell\right)
\left(\sum_{((K_1^\ell, K_2^\ell): \ell\in U)\in {\cal D}(U,2)} 
\, \prod_{\ell\in U}
(\rho^\ell_{K_1^\ell}\!+\!\rho^\ell_{K_2^\ell})
\right) \! \right)\\
&=&\!\sum_{U\subseteq \delta\setminus \D^\rho} \!\!
\left(\prod_{\ell\in U} \rho^\ell_\ell\right)
\left(\sum_{((K_1^\ell, K_2^\ell): \ell\in U)\in {\cal D}(U,2)} \;
\prod_{\ell\in U}(\rho^\ell_{K_1^\ell}+ \rho^\ell_{K_2^\ell})\right).
\end{eqnarray*}
This last equality uses $\rho^\ell_\ell=1$ when $\ell\in 
\D^\rho$, see (\ref{e13}) in Lemma \ref{lemma2} $(ii)$. 
By using notation $f^\ell(\gamma_\ell)$ introduced in (\ref{e15})
we have,
\begin{eqnarray*}
\sum_{\delta': \delta\rightarrow \delta'} \!P_{\delta, \delta'}&=&
\sum_{(\gamma^\ell: \ell\in \delta\setminus\D^\rho)\in 
\prod\limits_{\ell\in \delta\setminus\D^\rho}\DD_{1,2}(\ell)}
\;\, \prod_{\ell\in \delta\setminus\D^\rho}f^\ell(\gamma_\ell)\\
&=& \prod\limits_{\ell\in \delta\setminus\D^\rho}
\left(\sum_{\gamma_\ell\in \DD_{1,2}(\ell)} 
f^\ell(\gamma_\ell)\right)=1. 
\end{eqnarray*}
In this last equality we use (\ref{e16}).
\end{proof}

\begin{remark}
\label{remirr}
From the positive properties of coefficients $\rho^K_M$ we get that
$P_{\delta, \delta'}>0$ if and only if $\delta\rightarrow \delta'$.
Since there exists a path $\delta_1=\{I\}\rightarrow...\rightarrow 
\delta_k=\delta$ for all $\delta\in \partial(\tT)$, $\delta\neq 
\{I\}$, this path has positive probability.
\end{remark}

Let $Y=(Y_n: n\ge 0)$ be the Markov chain taking values in 
$\partial(\tT)$ defined by the transition stochastic 
matrix $P$. Let $(\Omega, {\cal F})$ be the measurable space with  
$\Omega=\partial(\tT)^\NN$ and $\FF$ the product $\sigma-$field. 
Let $(\PP_\delta: \delta\in \partial(\tT))$ be the family of
probability Markov measures on $(\Omega, {\cal F})$, all of them
with transition matrix $P$, and $\PP_\delta$  
starting from $\delta$. We will simply note $\PP:=\PP_{\{I\}}$, 
because most of the time the chain will assume to start from 
$Y_0=\{I\}$,
and this will be clear from the context or the notation. 
The mean expected values associated to $\PP_\delta$ and $\PP$
are noted by $\EE_\delta$ and $\EE$, respectively.

\medskip

The Markov chain $(Y_n: n\ge 0)$ can be also constructed
from a probability space 
$({\widetilde{\Omega}}, {\widetilde{\cal F}},{\bf P})$ 
containing an independent family of random variables 
$\left(\delta^K_n: K\in \Y(\J_\rho), n\ge 1 \right)$,  
where $\delta^K_n$ takes values in $\DD_{1,2}(K)$ and 
$$
{\bf P}(\delta^K_n=\delta)=f^K_\delta,
$$
where $f^K_\delta$ was defined in (\ref{e15}).
Thus, the random variables $(\delta^K_n: n\ge 1)$ are independent 
and identically distributed with law $f^K_\bullet$. It is easily
checked that the random sequence given by
$$
Y_0=\delta, \;\; Y_n=(\delta^K_n: K\in Y_{n-1})  \;\, \forall n\ge 1,
$$
defines a Markov chain $(Y_n)$ starting from $\delta$, and transition 
probability given by (\ref{e23}).

\medskip

Let us show that the Markov chain $(Y_n)$ fulfills the first claim
of this section: after $n-$steps of time  
the probability of the event $\{Y_n=\delta\}$ 
is the weight of all 
the trees of length $j$ whose set of leaves is $\delta$.
%We recall the definition of $q^n_\delta$ made in (\ref{eqwr2}).

\begin{lemma}
\label{lemma5}
For every $n\ge0$ and $\delta\in \partial(\tT)$ it holds 
$\PP(Y_n=\delta)=q^n_\delta$.
\end{lemma}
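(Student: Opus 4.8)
The plan is to prove the identity by induction on $n$, showing that the law of $(Y_n)$ under $\PP=\PP_{\{I\}}$ satisfies the same recursion as the coefficient vector $(q^n_\delta)$ produced by Theorem \ref{theo0}. The case $n=0$ is immediate: $\tT_0$ consists of the single tree $\{I\}$, so $q^0_\delta=\One[\delta=\{I\}]$ by (\ref{eqwr2}), while $\PP(Y_0=\delta)=\One[\delta=\{I\}]$ because the chain starts at $\{I\}$. For the inductive step I would first use the Markov property together with Remark \ref{remirr} (which gives $P_{\delta',\delta}>0$ exactly when $\delta'\to\delta$) to write
\[
\PP(Y_n=\delta)=\sum_{\delta':\,\delta'\to\delta}\PP(Y_{n-1}=\delta')\,P_{\delta',\delta},
\]
and then apply the induction hypothesis $\PP(Y_{n-1}=\delta')=q^{n-1}_{\delta'}$. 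This reduces the whole statement to the purely combinatorial recursion $q^n_\delta=\sum_{\delta':\,\delta'\to\delta}q^{n-1}_{\delta'}\,P_{\delta',\delta}$.

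To obtain this recursion I would read off the last-layer structure already present in the recursive construction (\ref{e17}) and in the proof of Lemma \ref{lemmafd}. Deleting the final layer of a tree $T\in\tT_n$ with $\partial(T)=\delta$ yields a unique truncation $T'\in\tT_{n-1}$, and $T$ is reconstructed from $T'$ by attaching to each leaf $\ell\in\partial(T')$ a one-step tree ${T'}^\ell\in\tT_1^\ell$; writing $\delta'=\partial(T')$ this forces $\delta'\to\delta$ and pins down, for every $\ell\in\delta'$, the dyadic partition $\{\ell_1,\ell_2\}\in\DD_{1,2}(\ell)$ through which $\delta$ refines $\delta'$ at $\ell$. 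The branch-weight computation in the expansion (\ref{e21}) shows that the weight attached to $T$ factors as the weight of its truncation $T'$ times $\prod_{\ell\in\delta'}f^\ell(\{\ell_1,\ell_2\})$, and by the definitions (\ref{e15}) of $f^\ell$ and (\ref{e23}) of $P$ this last product is exactly $P_{\delta',\delta}$. Since $P_{\delta',\delta}$ depends only on the pair $(\delta',\delta)$ and not on $T'$, it factors out of the sum over trees with fixed boundary $\delta'$; summing their truncation-weights gives $q^{n-1}_{\delta'}$ by (\ref{eqwr2}), and summing over the finitely many $\delta'$ with $\delta'\to\delta$ produces the desired recursion.

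The step I expect to demand the most care is precisely this weight factorisation: one has to check that the one-step extensions chosen at distinct leaves contribute independent factors whose product is $\prod_{\ell\in\delta'}f^\ell$, and that at a leaf which is genuinely split the two orientations of $\{\ell_1,\ell_2\}$ are correctly amalgamated into $\rho^\ell_{\ell_1}+\rho^\ell_{\ell_2}=f^\ell(\{\ell_1,\ell_2\})$, the ``stay'' case $f^\ell(\{\ell\})=\rho^\ell_\ell$ being handled by $\rho^\ell_\ell=1$ when $\ell\in\D^\rho$. A more transparent alternative, which avoids the tree bookkeeping altogether, is to argue from the i.i.d.\ construction of $(Y_n)$ through the variables $(\delta^K_n)$: there each atom $\ell$ of $Y_{n-1}$ is refined independently with law $f^\ell_\bullet$, so the one-step transition is manifestly $\prod_{\ell}f^\ell=P_{\delta',\delta}$, and only the matching of this transition with the recursion satisfied by the coefficients of $\Xi^n[\mu]$ remains to be verified.
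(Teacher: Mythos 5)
Your proposal is correct and takes essentially the same route as the paper: induction on $n$, the Markov property to reduce the claim to the combinatorial recursion $q^n_\delta=\sum_{\delta':\,\delta'\to\delta}q^{n-1}_{\delta'}P_{\delta',\delta}$, and the last-layer weight factorisation, which is precisely the step (\ref{e21}) from the proof of Lemma \ref{lemmafd} that the paper invokes at the corresponding point. Your explicit treatment of the truncation bijection and of the amalgamation $\rho^\ell_{\ell_1}+\rho^\ell_{\ell_2}=f^\ell(\{\ell_1,\ell_2\})$, as well as the alternative via the i.i.d.\ variables $(\delta^K_n)$, merely spells out details the paper leaves implicit.
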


\begin{proof}
We will use a recurrence argument.
For $n=0$ the property holds because $Y_0=\{I\}$ and 
the class of trees of length $0$ is the singleton $\tT_0=\{\{I\}\}$. 
Assume the property holds up to $n$ let us show it for $n+1$. We have
\begin{eqnarray*}
\PP(Y_{n+1}=\delta')&=& 
\sum_{\delta\in \partial(\tT): \delta \rightarrow \delta'}
\PP(Y_n=\delta) P_{\delta,\delta'}\\
&=&\sum_{\delta\in \partial(\tT): \delta \rightarrow \delta'}
q^n_\delta \, \left(\prod_{\ell\in \delta\cap \delta'}
\rho^\ell_\ell\right)
\left(\prod_{\ell\in \delta\setminus \delta'}\!\!
(\rho^\ell_{\ell_1}\!+\!\rho^\ell_{\ell_2})\right).
\end{eqnarray*}
Now we use the step (\ref{e21}) of the proof of Lemma \ref{lemmafd},
which allows to get $\PP(Y_{n+1}=\delta')=q^{n+1}_{\delta'}$. 
The result is proven.
\end{proof}

The partition $\D^\rho$ is an absorbing state for the chain $(Y_n)$ 
because $P_{\D^\rho,\D^\rho}=\prod_{L\in \D^\rho}\rho^L_L=1$, 
and so $Y_n=\D^\rho$ implies $Y_{n+k}=\D^\rho$ for all $k\ge 0$.

\medskip

Let us define the hitting times,
$$
\forall B\subseteq \partial(\tT): \quad \zeta_B=\inf\{n\ge 0: Y_n\in B\}.
$$
For singletons we simply put, 
$$
\forall \delta\in \partial(\tT): \quad \zeta_\delta= \zeta_{\{\delta\}}.
$$
For $\delta=\{I\}$ we have $\PP(\zeta_{\{I\}}=0)=1$. The random time
for attaining $\D^\rho$,
%\begin{equation}
%\label{e25}
$$
\zeta=\zeta_{\D^\rho}=\inf\{n\ge 0: Y_n=\D^\rho\},
$$
%\end{equation}
is an absorbing time because $Y_{\zeta+n}=\D^\rho$ for all 
$n\ge 0$.

\medskip

Since $Y_n(\omega)\in \partial(\tT)$ we can define the random probability:
%\begin{equation}
%\label{e26}
$$
\forall \omega\in \Omega: 
\quad \Xi^n[\mu](\omega)=\bigotimes_{K\in Y_n(\omega)} \mu_K.
$$
%\end{equation}
 From above discussion and Lemma \ref{lemma1} $(ii)$ 
we find,
$$
\forall n\ge 0: \;\; 
\Xi^{\zeta(\omega)+n}[\mu](\omega)=\bigotimes_{L\in \D^\rho} \mu_L.
$$

\begin{remark}
\label{eqtodas}
Note that 
\begin{equation}
\label{eq27}
\{\Xi^n[\mu]\neq \otimes_{L\in \D^\rho}\}\subseteq \{\zeta\!>\!n\}
\hbox{ and so }
\PP\left(\Xi^n[\mu]\neq \otimes_{L\in \D^\rho}
\mu_L \, \right)\le \PP(\zeta\!>\! n).
\end{equation}
It can be checked that when the spaces 
$I$, $A_i$, $i=1,..,n$, have sufficiently many points
we have the equivalence
$$
\big\{\forall \mu\in {\cal P}_I: \; 
\Xi^n[\mu]\neq \otimes_{L\in \D^\rho}\big\}=\{\zeta>n\}.
$$
For some particular 
${\widetilde{\mu}}\in {\cal P}_I$ the inequality (\ref{eq27}) 
can be strict. For instance, if 
${\widetilde{\mu}}=\otimes_{L\in \D^\rho} {\widetilde{\mu}}_L$ 
then
$\Xi^n[{\widetilde{\mu}}](\omega)={\widetilde{\mu}}$ for all $n\ge0$,
but $\PP(\zeta>0)=1$ in the nontrivial case $\D^\rho\neq \{I\}$.
\end{remark}

\medskip

In the next result we show that the random measure $\Xi^n[\mu](\omega)$ 
converges geometrically to a product measure with
the marginals of $\mu$ at the atoms of $\D^\rho$. This is controlled 
with the geometric decay rate of $\PP(\zeta> n)$. Also
we give the quasi-limiting distribution which results
from conditioning to the event $\{\zeta> n\}$ for $n\to \infty$.

\medskip

We will supply the notions of quasi-limiting distribution 
(and further of quasi-stationary distributions) 
in the context of the Markov chain $(Y_n)$. 
The definition and study of these concepts in the context of finite Markov 
chains which are irreducible on the non-absorbing states
are found in the pioneer work \cite{ds} and the 
continuous time case can be seen in Chapter $3$ of 
monograph \cite{cms}. There is a large body of literature
on quasi stationary distributions, 
in particular for extinction in population dynamics
and we recommend addressing to \cite{pp} for an exhaustive 
list of references.

\medskip

We emphasize
that $(Y_n)$ is not irreducible on $\partial(\tT)\setminus \{\D^\rho\}$, 
because when $(Y_n)$ exits from some state it does never return to it. 
In fact, $\delta_1\rightarrow
\delta_2\rightarrow ... \rightarrow \delta_k$ and
$\delta_k\neq \delta_1$ implies $\delta_k\not\rightarrow \delta_1$
(see Remark \ref{remirs}).
Therefore, we cannot apply
Perron-Frobenius theory which is in the theoretical basis of 
the main results of quasi-stationary distributions on finite Markov chains. 
So, we need to develop new elements to describe the
quasi-limiting behavior and in particular the geometric decay rate. 

\medskip

In this purpose we introduce a class of distinguished partitions in 
$\partial(\tT)$. Any $K\in \Y(\J_\rho)$ defines the partition
%\begin{equation}
%\label{e28}
$$
\D^{\rho,K}=\{L\in \D^\rho: L\cap J=\emptyset\}\cup \{K\}. 
$$
%\end{equation}
So, the partition $\D^{\rho,K}$ has 
the same atoms as $\D^\rho$ when they do not
intersect $K$, and all the other atoms collapse into the unique atom 
$K\in \D^{\rho,K}$. 
For $a\in [0,1]$ define the following classes of sets and partitions,
\begin{equation}
\label{e50d}
\E(a)=\{K\in \Y(\J_\rho): \rho^K_K=a\}
\hbox{ and } \partial(\tT)^{\E(a)}=\{\D^{\rho,K}: K\in \E(a)\}.
\end{equation}
Note that $\E(a)$ and so $\partial(\tT)^{\E(a)}$ can be empty. When $a=1$
we have $\E(1)=\D^\rho$ and $\partial(\tT)^{\E(1)}=\{\D^\rho\}$.
When $\rho_I>0$ we have $\E(\rho_I)=\{I\}$ and 
$\partial(\tT)^{\E(\rho_I)}=\{\{I\}\}$.

\medskip

\begin{theorem}
\label{theo1}
Assume $\rho_I<1$. Then,
\begin{equation}
\label{e29}
\PP(\zeta<\infty)=1.
\end{equation}
Define
$$
\eta=\max\{\rho^K_K: K\in \Y(\J_\rho), K\not\in \D^\rho\}.
$$
Then $\eta\in (0,1)$. Let
\begin{equation}
\label{e31}
\E=\E(\eta),\quad \partial(\tT)^\E=\partial(\tT)^{\E(\eta)},\quad
\zeta^{\E}=\zeta_{\partial(\tT)^\E}.
\end{equation}
Then $0<\PP(\zeta^{\E}<\infty)<1$ and the geometric rate of decay of
$\PP(\zeta>n)$ satisfies,
\begin{equation}
\label{50e}
\lim\limits_{n\to \infty} \eta^{-n} \PP(\zeta\!>\!n)=
\lim\limits_{n\to \infty} \eta^{-n} 
\PP(\zeta\!>\!n, Y_n\!\in \!\partial(\tT)^\E)
=\EE\left(\eta^{-\zeta^\E}, \, \zeta^\E\!<\!\infty \right)
\!\in \! (0,\infty).
\end{equation}
The quasi-limiting distribution on  
$\partial(\tT)\setminus \{\D^\rho\}$ is given by,
\begin{eqnarray}
\nonumber
\forall \delta\in \partial(\tT)^\E:&{}&
\lim\limits_{n\to \infty} \PP(Y_n=\delta\,| \, \zeta>n)=
\frac{ \EE\left(\eta^{-\zeta_\delta}, \, \zeta_{\delta}<\infty \right)}
{\EE\left(\eta^{-\zeta^{\E}}, \, \zeta^{\E}<\infty\right)},\\
\label{e32}
\forall \delta\in \partial(\tT)\setminus \partial(\tT)^\E:&{}&
\lim\limits_{n\to \infty} \PP(Y_n=\delta\,| \, \zeta>n)=0.
\end{eqnarray}
Furthermore, we have the following ratio limit relation for 
$\delta\in \partial(\tT)\setminus \{\D^\rho\}$,
\begin{equation}
\label{50b}
\lim\limits_{n\to \infty}
\frac{\PP_\delta(\zeta>n)}{\PP(\zeta>n)}
=\frac{\EE_\delta(\eta^{-\zeta^\E}, \zeta^\E<\infty)}
{\EE(\eta^{-\zeta^\E}, \zeta^\E<\infty)}.
\end{equation}
Both ratios vanish only when $\PP_\delta(\zeta^\E<\infty)=0$.
Finally, the vector 
\begin{equation}
\label{rev1}
\varphi=(\varphi_\delta: \delta\in 
\partial(\tT)\setminus 
\{\D^\rho\}) \hbox{ with }
\varphi_\delta=\EE_\delta(\eta^{-\zeta^\E}, \zeta^\E<\infty),
\end{equation}
is a right eigenvector of the restriction of 
$P$ to $\partial(\tT)\setminus \{\D^\rho\}$, and it has 
eigenvalue $\eta$.
\end{theorem}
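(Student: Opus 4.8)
The whole argument rests on one structural observation about the sets in $\E=\E(\eta)$, which I would isolate first. If $K\in\E$ and $\{M,K\setminus M\}$ is any strictly dyadic split of $K$ coming from some $J\in\J_\rho^{(I)}$ (so $M=K\cap J$ and $K\setminus M=K\cap J^c$ are nonempty and properly contained in $K$), then (\ref{e12}) of Lemma \ref{lemma2} gives $\rho^M_M>\rho^K_K=\eta$ and likewise $\rho^{K\setminus M}_{K\setminus M}>\eta$. Since $\eta$ is the maximum of $\rho^\bullet_\bullet$ over non-atoms, both values must equal $1$, so by (\ref{e13}) both $M$ and $K\setminus M$ are atoms of $\D^\rho$. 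As the atoms of $\D^{\rho,K}$ other than $K$ already lie in $\D^\rho$ and never move, this shows that from $\D^{\rho,K}$ the only transitions are the self-loop of probability $\rho^K_K=\eta$ (by (\ref{e24})) and a single jump to the absorbing state $\D^\rho$ of probability $1-\eta$. Three consequences are immediate: $\PP_{\D^{\rho,K}}(\zeta>t)=\eta^t$ exactly; once the chain enters $\partial(\tT)^\E$ it can only linger at one state and is then absorbed, so $\{\zeta^\E<\infty,\,Y_{\zeta^\E}=\D^{\rho,K}\}=\{\zeta_{\D^{\rho,K}}<\infty\}$ with $\zeta^\E=\zeta_{\D^{\rho,K}}$ on this event; and $\partial(\tT)^\E$ is visited at most once. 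The facts $\eta\in(0,1)$ (each $\rho^K_K$ with $K\notin\D^\rho$ lies in $(0,1)$ by Definition \ref{def2} and (\ref{e13}), the maximum being over a finite nonempty set) and (\ref{e29}) (finite state space, and from every non-absorbing $\delta$ the partition $\D^\rho$ is reachable by splitting each non-atom down to $\D^\rho$-atoms, cf. (\ref{e3}) and Remark \ref{remirr}) are elementary and I would dispatch them at the start. For the eigenvector claim (\ref{rev1}) I would use first-step analysis: writing $\varphi_\delta=\EE_\delta(\eta^{-\zeta^\E},\zeta^\E<\infty)$ and noting $\varphi_{\D^\rho}=0$, for $\delta\notin\partial(\tT)^\E\cup\{\D^\rho\}$ one has $\zeta^\E\ge1$, and conditioning on $Y_1$ gives $\varphi_\delta=\eta^{-1}\sum_{\delta'\ne\D^\rho}P_{\delta,\delta'}\varphi_{\delta'}$, i.e. $(Q\varphi)_\delta=\eta\varphi_\delta$ for $Q$ the restriction of $P$ to $\partial(\tT)\setminus\{\D^\rho\}$; for $\delta=\D^{\rho,K}\in\partial(\tT)^\E$ the structural observation gives $\varphi_\delta=1$ and $(Q\varphi)_\delta=\eta$, since the only non-absorbing successor is $\D^{\rho,K}$ itself. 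Hence $Q\varphi=\eta\varphi$.

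The quantitative core is an exact sojourn decomposition. Because $\partial(\tT)^\E$ is entered at a single state and then only self-looped until absorption, $\{Y_n=\D^{\rho,K}\}$ forces $\zeta^\E=m\le n$ with $Y_m=\D^{\rho,K}$ followed by $n-m$ self-loops, so
\[
\PP(Y_n=\D^{\rho,K})=\sum_{m=0}^{n}\PP(\zeta^\E=m,\,Y_m=\D^{\rho,K})\,\eta^{\,n-m}.
\]
Multiplying by $\eta^{-n}$ and letting $n\to\infty$ (dominated convergence, with the dominating series finite as explained below) yields $\eta^{-n}\PP(Y_n=\D^{\rho,K})\to\EE(\eta^{-\zeta_{\D^{\rho,K}}},\zeta_{\D^{\rho,K}}<\infty)$. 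Summing over $K\in\E$ gives the middle$=$right equality of (\ref{50e}), and keeping $K$ fixed gives the numerator in the first line of (\ref{e32}).

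To pass from $\PP(Y_n\in\partial(\tT)^\E)$ to $\PP(\zeta>n)$ I would show the remaining mass is negligible. A state $\delta\in W:=\partial(\tT)\setminus(\{\D^\rho\}\cup\partial(\tT)^\E)$ has self-loop $\prod_{\ell\in\delta}\rho^\ell_\ell$ bounded by some $\eta''<\eta$: if $\delta$ has two or more non-atoms the product is at most $\eta^2$, and if it has a single non-atom $K$ then $K\notin\E$ forces $\rho^K_K<\eta$. Since the chain is monotone in fineness (Remark \ref{remirs}), the restriction $Q_W$ of $P$ to $W$ is triangular with these self-loops on the diagonal, so its spectral radius $\mathrm{spr}(Q_W)\le\eta''<\eta$ and $\PP(\zeta>n,\,Y_n\notin\partial(\tT)^\E)=(Q_W^{\,n}\One)_{\{I\}}=o(\eta^n)$. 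This gives the left$=$middle equality of (\ref{50e}). The identical two computations carried out under $\PP_\delta$ give $\lim_n\eta^{-n}\PP_\delta(\zeta>n)=\EE_\delta(\eta^{-\zeta^\E},\zeta^\E<\infty)=\varphi_\delta$, whence the ratio limit (\ref{50b}) with denominator $\varphi_{\{I\}}$; the vanishing criterion is simply $\varphi_\delta=0\Leftrightarrow\PP_\delta(\zeta^\E<\infty)=0$. Dividing the asymptotics of $\PP(Y_n=\delta)$ by that of $\PP(\zeta>n)$ then yields (\ref{e32}), the second case being $0$ because $\PP(Y_n=\delta)\le(Q_W^{\,n}\One)_{\{I\}}=o(\eta^n)$ for $\delta\in W$.

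I expect two points to need the most care. First, the strict gap $\mathrm{spr}(Q_W)<\eta$ must be used to justify the interchange of limit and summation above: $\PP(\zeta^\E=m)\le(Q_W^{\,m-1}\One)_{\{I\}}\le C(\mathrm{spr}(Q_W)+\varepsilon)^{m-1}$, so $\EE(\eta^{-\zeta^\E},\zeta^\E<\infty)=\sum_m\eta^{-m}\PP(\zeta^\E=m)<\infty$ and dominated convergence applies; this finiteness, hence the whole limit being in $(0,\infty)$, is the main technical obstacle and is exactly where the gap below $\eta$ is essential. Second, the inequalities $0<\PP(\zeta^\E<\infty)<1$ of (\ref{e31}): positivity holds because for any $K\in\E$ the partition $\D^{\rho,K}$ is reachable from $\{I\}$ with positive probability (keep $K$ intact while splitting the complementary atoms down to $\D^\rho$-atoms, cf. (\ref{e3}) and Remark \ref{remirr}), whereas strictness $<1$ requires exhibiting an absorption path that never meets $\partial(\tT)^\E$, i.e. a minimal non-atom lying outside $\E$. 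This last point is where I would invoke the standing non-degeneracy assumption that $I$, the $A_i$ and $\J_\rho$ are large enough — in particular $\rho_I=\rho^I_I<\eta$, so that $\{I\}\in W$ and the generic regime $0<\PP(\zeta^\E<\infty)<1$ holds.
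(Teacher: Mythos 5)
Your proposal is correct, and its overall architecture coincides with the paper's: the decomposition of $\PP(\zeta>n)$ according to whether $Y_n\in \partial(\tT)^\E$, the renewal-type sum $\PP(\zeta>n,\,Y_n=\D^{\rho,K})=\sum_j\PP(\zeta_{\D^{\rho,K}}=j)\,\eta^{n-j}$ with dominated convergence, and first-step analysis for the eigenvector are exactly the paper's steps. Two local differences are worth noting. First, the crucial tail bound $\PP(\forall j\le n:\,Y_j\notin\partial(\tT)^\E\cup\{\D^\rho\})=o(\eta^n)$ is the paper's Lemma \ref{lemma6}, proved there by hand: any path in $U$ visits at most $k^*$ distinct states, the sojourn configurations are counted by $\binom{n-1}{s}$, and the product of self-loops is bounded by $\beta_0^{n-s}$ with $\beta_0<\eta$ from (\ref{e33ax}). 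You replace this with the observation that, since the chain never revisits a state (Remark \ref{remirs}), the substochastic restriction $Q_W$ is triangularizable with diagonal entries $\le\max\{\beta,\eta^2\}<\eta$, so $\mathrm{spr}(Q_W)<\eta$ and Gelfand's formula gives the same bound. This is shorter and cleaner, and it isolates the real mechanism (the spectral gap below $\eta$ on the transient block) that the paper's combinatorics encodes implicitly; the paper's argument, on the other hand, produces explicit constants and avoids any appeal to spectral theory. Second, you actually prove the structural fact (\ref{eq41}) --- that from $\D^{\rho,K}$ with $K\in\E$ the only moves are the self-loop of weight $\eta$ and the jump to $\D^\rho$ --- via the strict monotonicity (\ref{e12}) and the characterization (\ref{e13}), whereas the paper asserts it without argument; your derivation (any split of $K\in\E$ produces two sets $M$, $K\setminus M$ with $\rho^M_M,\rho^{K\setminus M}_{K\setminus M}>\eta$, forcing both to be atoms) is exactly the right justification and even shows each $K\in\E$ is a union of exactly two atoms. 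Finally, on $\PP(\zeta^\E<\infty)<1$ you are right to flag the non-degeneracy issue: the paper's proof (a path to some $\delta'$ with at least two non-atoms and $\delta'\rightarrow\D^\rho$) tacitly needs that path to avoid $\partial(\tT)^\E$, and in small examples (e.g.\ when every first split of $I$ lands in $\partial(\tT)^\E$) the inequality can fail; both your argument and the paper's rest on the standing assumption at the end of Section \ref{sec2} that $I$ and $\J_\rho$ are large enough, and your version is the more explicit about exactly where it is used.
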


\noindent {\it Proof}.
It is obvious that $\eta>0$ and from (\ref{e13}) 
in Lemma \ref{lemma2} $(ii)$ we have $\eta<1$. 
Then $\E\cap \D^\rho=\emptyset$ because 
$K\in \E$ and $L\in \D^\rho$ imply
$\rho^K_K=\eta<1=\rho^L_L$. 
Note that if $\delta=D^{\rho,K}$ then
$P_{\delta,\delta}=\rho^K_K$. 
Hence 
\begin{equation}
\label{e33}
\forall \delta\in \partial(\tT)^\E: \quad
P_{\delta,\delta}=\eta.
\end{equation}
We claim that  
$$
\max\{P_{\delta,\delta}: \delta\in \partial(\tT),
\delta\neq \D^\rho\}= \eta.
$$
This follows from (\ref{e33}) for partitions having at most 
one atom that is not in $\D^\rho$, and if 
$\delta'\in \partial(\tT)$
has at least two different atoms $K,K'$ that are not elements
of $\D^\rho$, from (\ref{e24}) we get
$P_{\delta',\delta'}\le \eta^2$.

\medskip

Let
\begin{equation}
\label{eqlast}
\beta_0=\max\{P_{\delta,\delta}: \delta\in \partial(\tT),
\delta\neq \D^\rho,
\delta\!\not\in \!\partial(\tT)^\E\}.
\end{equation}
We have
\begin{equation}
\label{e33ax}
\beta_0\le \max\{\beta,\eta^2\}<\eta \, \hbox{ where }
\beta=\sup\{\rho^K_K: K\in \Y(\J_\rho), 
\rho^K_K< \eta\}.
\end{equation}
In fact, when $K\not\in \E\cup\{D^\rho\}$ we have
$P_{\D^{\rho,K}, \D^{\rho,K}}=\rho^K_K\le \beta$ and if
a partition has at least two different atoms $K,K'$ 
that are not in $\D^\rho$, then
$P_{\delta',\delta'}\le \eta^2$. Then, (\ref{e33ax}) is shown.

\medskip

Let us show (\ref{e29}). We use that when $(Y_n)$ exits 
from some state it does never 
return to it and inequality $P_{\delta,\delta}<1$ for 
$\delta\neq \D^\rho$. In fact, they allow us to prove 
that the Markov chain $(Y_n)$ visits every
state $\delta\neq \D^\rho$ only a finite number of times $\PP-$a.s.,
$$ 
\forall \delta\in \partial(\tT), \delta\neq \D^\rho: \quad 
\PP(\#\{n: Y_n=\delta\}<\infty)=1.
$$
Then, by using that $\D^\rho$ is an absorbing state, we obtain
(\ref{e29}),
$$
\PP(\exists n: Y_n=\D^\rho)=\PP(\zeta<\infty)=1.
$$

The existence of paths from $\{I\}$ to $\partial(\tT)^\E$ with 
positive probability gives $\PP(\zeta^\E<\infty)>0$.
On the other hand there exists 
$\delta'\in \partial(\tT)$
with $\delta'\rightarrow \D^\rho$ and 
$\#\{J\in \delta': J\not\in \D^\rho\}>1$. 
The existence of some path from $\{I\}$ to $\delta'$
with positive probability now gives
$\PP(\zeta^\E<\infty)<1$. We have shown
$\PP(\zeta^\E<\infty)\in (0,1)$.

\medskip

Let us now turn to the proof of relations (\ref{50e}), (\ref{e32}) 
and (\ref{50b}). We have
\begin{equation}
\label{eq41}
\forall \, \delta\in \partial(\tT)^\E, \, j\ge 0:\quad
\delta\rightarrow \delta' \Leftrightarrow \,
\big[\, \delta'=\delta \vee \delta'=\D^\rho\big].
\end{equation}
Then, the definition of $\E$ and $\partial(\tT)^\E$ in (\ref{e31})
and the fact that $\D^\rho$ is absorbing, allow us to get 
$$
\forall n\ge 0,\, \delta\in \partial(\tT)^\E:\quad  
\PP_\delta(Y_n=\delta)=\eta^n.
$$

We have
\begin{equation}
\label{e34}
\PP(\zeta>n)=\PP(\zeta>n, Y_n\not\in \partial(\tT)^\E)+
\PP(\zeta>n, Y_n\in \partial(\tT)^\E).
\end{equation}
Since $P_{\delta, \delta'}>0$ when $\delta\rightarrow \delta'$ 
and there exists paths of positive probability from $\{I\}$ 
to $\delta\in \partial(\tT)$, $\delta\neq \{I\}$ 
(see Remark \ref{remirr}), we obtain the existence
of $k_0\ge 1$ such that 
$$
\forall \, K\in \E:\quad \PP(\zeta_{\D^{\rho,K}} \le k_0)>0.
$$
So, 
$$
\alpha(\E):=
\min\{\PP(\zeta^{\D^{\rho,K}} \le k_0): K\in \E\}>0.
$$
Then, from the Markov property we get, 
\begin{eqnarray}
\nonumber
\PP(\zeta>n)&\ge& \sum_{j=1}^{k_0}\PP(\zeta^{\D^{\rho,K}}=j, \zeta>n)\\
\nonumber
&\ge & \sum_{j=1}^{k_0}\PP(\zeta^{\D^{\rho,K}}=j)
\PP_{D^{\rho,K}}(\zeta>n-j)\\
\nonumber
&\ge & 
\sum_{j=1}^{k_0}\PP(\zeta^{\D^{\rho,K}}=j)
\PP_{D^{\rho,K}}(Y_{n-j}=D^{\rho,K})\\
\nonumber
&\ge & \sum_{j=1}^{k_0}\PP(\zeta^{\D^{\rho,K}}=j) \eta^{n-j}\\
\label{e35}
&\ge & \alpha(\E) \eta^n.
\end{eqnarray}

To analyze the first term at the right hand side of 
equality (\ref{e34}) it will useful to first prove 
the following result, which uses the quantity $\beta_0$
defined (\ref{eqlast}) which satisfies $\beta_0<\eta<1$, 
see (\ref{e33ax}).

\begin{lemma}
\label{lemma6}
We have,
\begin{equation} 
\label{e36}
\forall\, \theta\!>\!0 \, \exists C'\!=\!C'(\theta):\quad 
\PP(\forall j\!\le \! n: \; Y_j\not\in (\partial(\tT)^\E \cup \{\D^\rho\})
\le C'(\beta_0\!+\!\theta)^n.
\end{equation} 
\end{lemma}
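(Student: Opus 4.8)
The plan is to bound the probability that the chain avoids the target set $\partial(\tT)^\E \cup \{\D^\rho\}$ for all times $j \le n$ by controlling what happens at the diagonal transitions together with a uniform bound coming from $\beta_0$. The key observation is that the event $\{\forall j \le n:\, Y_j \not\in (\partial(\tT)^\E \cup \{\D^\rho\})\}$ confines the trajectory to the set $S := \partial(\tT) \setminus (\partial(\tT)^\E \cup \{\D^\rho\})$ of ``bad'' states, and on $S$ the largest diagonal transition probability is exactly $\beta_0 = \max\{P_{\delta,\delta}: \delta \in \partial(\tT),\, \delta \neq \D^\rho,\, \delta \not\in \partial(\tT)^\E\}$, which by (\ref{e33ax}) satisfies $\beta_0 < \eta < 1$. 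So first I would write the probability in question as a sum over length-$n$ paths $(\delta_0 = \{I\} \to \delta_1 \to \cdots \to \delta_n)$ with every $\delta_j \in S$, of the product of transition probabilities $\prod_{j=1}^n P_{\delta_{j-1},\delta_j}$.

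The main structural fact I would exploit is that, by Remark \ref{remirs}, the relation $\rightarrow$ is an order and the chain never returns to a state it has left; hence along any admissible path the \emph{strict} moves $\delta_{j-1} \to \delta_j$ with $\delta_{j-1} \neq \delta_j$ number at most some fixed constant $R$ depending only on $\J_\rho$ (each strict move strictly refines the partition, and the length of any refinement chain in $\DD(I)$ is bounded by $|I|$). Thus a path of length $n$ consists of at most $R$ strict transitions and at least $n - R$ diagonal (loop) transitions $\delta_{j-1} \to \delta_{j-1}$, each of which contributes a factor $P_{\delta_{j-1},\delta_{j-1}} \le \beta_0$ since $\delta_{j-1} \in S$. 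The strict-move factors are each bounded by $1$, and there are at most a bounded number of distinct bad states and a bounded number of ways to insert the $\le R$ strict moves among the $n$ time slots, namely $\binom{n}{\le R} = O(n^R)$ many path-skeletons. This gives a bound of the form
\begin{equation*}
\PP(\forall j \le n:\, Y_j \in S) \le C_1\, n^{R}\, \beta_0^{\,n-R},
\end{equation*}
for a constant $C_1$ depending only on $\J_\rho$.

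To convert the polynomial-times-geometric bound $C_1 n^R \beta_0^{n-R}$ into the clean form $C'(\beta_0 + \theta)^n$, I would absorb the polynomial factor: for any fixed $\theta > 0$, since $\beta_0 < \beta_0 + \theta$, the ratio $n^R (\beta_0/(\beta_0+\theta))^n$ is bounded in $n$ (it tends to $0$), so there is a constant $C'' = C''(\theta)$ with $n^R \beta_0^n \le C''(\beta_0+\theta)^n$ for all $n \ge 0$. Setting $C'(\theta) = C_1 \beta_0^{-R} C''(\theta)$ yields (\ref{e36}).

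I expect the main obstacle to be the careful justification of the uniform bound $R$ on the number of strict transitions and the counting of path-skeletons: one must verify using Remark \ref{remirs} that no bad state is revisited, so that the trajectory decomposes into at most $R+1$ constant runs separated by strict refinements, and that $\beta_0$ (not merely some larger loop probability) governs \emph{every} diagonal step on $S$. The delicate point is confirming that a state $\delta \in S$ really has $P_{\delta,\delta} \le \beta_0$ in all cases, which is precisely the content of the two-atom analysis already recorded in (\ref{e33ax}): a bad state either has a single off-$\D^\rho$ atom $K$ with $\rho^K_K \le \beta < \eta$, or has at least two off-$\D^\rho$ atoms and hence $P_{\delta,\delta} \le \eta^2 < \eta$. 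Once this uniformity is in hand, the remainder is the routine polynomial-absorption estimate above.
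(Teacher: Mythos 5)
Your proposal is correct and follows essentially the same route as the paper's proof: decompose any trajectory confined to $U=\partial(\tT)\setminus(\partial(\tT)^\E\cup\{\D^\rho\})$ into a uniformly bounded number of strict transitions separated by diagonal runs, bound each diagonal step by $\beta_0$ (which for $\delta\in U$ is immediate from the definition (\ref{eqlast}), not from the two-atom analysis in (\ref{e33ax}) — that estimate is only needed later to get $\beta_0<\eta$), count the $O(n^R)$ skeletons, and absorb the polynomial into $(\beta_0+\theta)^n$. Your justification of the uniform bound $R\le \#I-1$ via the strictly increasing atom count is in fact a cleaner version of the paper's argument, which bounds the skeleton length by $k^*=\sum_{L\in\D^\rho}(\#I-\#L-1)$ through nested chains of sets containing a fixed atom.
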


\noindent{\it Proof of Lemma \ref{lemma6}}.
Let $U=\partial(\tT)\setminus (\partial(\tT)^\E\cup \{\D^\rho\})$.
Put $\delta_1=\{I\}$. For every $s\ge 1$ denote by 
$$
{\cal C}(U,s)=\{(\delta_1,..,\delta_s)\in U^s:
\forall r\le s-1,\;  \delta_r\to \delta_{r+1}\hbox{ and } 
\delta_r\neq \delta_{r+1}\}.
$$
(So, $P_{\delta_r,\delta_{r+1}}>0$ for
all $r=1,..,s-1$, see Remark \ref{remirr}).
We have
\begin{eqnarray*}
&{}& \PP(\forall j\le n: \; Y_j\in U)\\
&{}& =\sum_{s\ge 1}\; \sum_{(\delta_1,..,\delta_s)\in {\cal C}(U,s)} \;
\prod_{r=1}^{s-1}
P_{\delta_r,\delta_{r+1}} \; \left(
\sum_{k_1,..,k_s\ge 0: \sum_{r=1}^s k_r=n-s} 
P^{k_r}_{\delta_r,\delta_r}\right). 
\end{eqnarray*}
When $(\delta_1,..,\delta_s)\in {\cal C}(U,s)$ we have 
that every $\delta_k$ with $k\le s$ satisfies 
$P_{\delta_k,\delta_k}\le \beta_0$. On the other hand,
$$
\#\{(k_1,..,k_s): \forall r\le s, \; k_r\ge 0; \; \sum_{r=1}^s 
k_r=n\!-\!s\}=\binom{n\!-\!1}{s}.
$$
Then,
$$
\PP(\forall j\le n: \; Y_j\in U)\le  
\sum_{s\ge 1} \binom{n\!-\!1}{s} \beta_0^{n-s} 
\left(\sum_{(\delta_1,..,\delta_s)\in {\cal C}(U,s)}
\; \prod_{r=0}^{s-1}P_{\delta_r,\delta_{r+1}} \right). 
$$

We claim that there exists a constant $k^*$ such that
${\cal C}(U,s)\neq \emptyset$ implies $s\le k^*$.
Let us show it. Fix an atom $L\in \D^\rho$. Let
$(K_n: n\ge 1)$ be a sequence of sets constructed
in an inductive way and satisfying the following properties:
$K_1=I$;  $L\subseteq K_n$ for all $n$;
$K_{n+1}=K_n\cap J_n$ for some $J_n\in \J_\rho$
and $K_{n+1}\subset K_n$ for all $n$.
Then, after a number $n_0$ of steps bounded by $\#I-\#L-1$ one
necessarily has $K_{n_0}=L$ and the construction is stopped.
Now, define
$k^*=\sum_{ L\in \D^\rho}(\#I-\#L-1)$. A consequence of 
the above argument is that the existence of some 
$(\delta_1,..,\delta_s)\in {\cal C}(U,s)$ implies
$s\le k^*$. So,
$$
C_1=\sum_{s\ge 1} \sum_{(\delta_1,..,\delta_s)\in {\cal C}(U,s)}
\; \prod_{r=0}^{s-1}P_{\delta_r,\delta_{r+1}}<\infty.
$$
On the other hand, for $\theta'\in (0,1)$ we have
$$
C_2(\theta')=\max_{s\le k^*}\; \sup_{n\ge 1} \binom{n-1}{s} 
(1-\theta')^{n-k^*}<\infty. 
$$
Then
$$
\PP(\forall j\le n: \; Y_j\in U)\le  C_1\cdot C_2(\theta') 
\beta_0^{n-k^*}/ (1-\theta')^{n-k^*}.
$$
So by taking $\theta'\in (0,1)$ such that
$\beta_0/ (1-\theta')<\beta_0+\theta$ we get that the constant
$$
C'=(\beta_0+\theta)^{-k^*} C_1\cdot C_2(\theta')
$$ 
makes the job in (\ref{e36}). 
$\Box$ 

\bigskip

\noindent {\it Continuation with the proof of Theorem \ref{theo1}}.

In (\ref{e36}) we will always take $\theta>0$ such that 
$\beta_0+\theta<\eta$. Hence, from (\ref{e35}) and 
(\ref{e36}) we find,
\begin{equation}
\label{50a}
\PP(Y_n\not\in \partial(\tT)^\E \, | \, \zeta>n)\le 
C'' \left((\beta_0+\theta)/\eta\right)^n\to 0
\hbox{ as } n\to \infty,
\end{equation}
with $C''=C'/\alpha(\E)$. Therefore,
\begin{equation}
\label{50f}
\lim\limits_{n\to \infty}\PP(Y_n\in \partial(\tT)^\E \, | \, \zeta>n)=1.
\end{equation}
Let us examine the second term at the right hand side 
of equality (\ref{e34}). For every $K\in \E$ we have
\begin{eqnarray*}
%\nonumber
\PP(\zeta>n, Y_n=D^{\rho,K})&=&\sum_{j=1}^n 
\PP(\zeta>n, \zeta_{D^{\rho,K}}=j)\\
%\nonumber
&=&\sum_{j=1}^n\PP(\zeta_{D^{\rho,K}}=j)
\PP_{D^{\rho,K}}(\zeta>n-j)\\
%\nonumber
&=&\sum_{j=1}^n\PP(\zeta_{D^{\rho,K}}=j)\eta^{n-j}\\ 
%\label{eq50}
&=&\eta^n
\left(\sum_{j=1}^n\eta^{-j}\PP(\zeta_{D^{\rho,K}}=j)\right).
\end{eqnarray*}
Since 
\begin{eqnarray*}
\PP(\zeta_{D^{\rho,K}}=j)&\le& \PP(\zeta^\E=j)\\
&\le&\PP(\forall n\le j-1: \; Y_n\not\in (\partial(\tT)^\E)\cup \{\D^\rho\})
\le C'(\beta_0+\theta)^{j-1},
\end{eqnarray*}
and $\beta_0+\epsilon<\eta$, we get 
$$
\sum_{j=1}^\infty \eta^{-j}\PP(\zeta_{D^{\rho,K}}=j)<\infty.
$$
Hence, 
\begin{eqnarray}
\label{e40y}
\forall K\in \E:\; 
\lim\limits_{n\to \infty}\eta^{-n}\PP(\zeta>n, Y_n=D^{\rho,K})&=&
\sum_{j=1}^\infty\eta^{-j}\PP(\zeta_{D^{\rho,K}}=j)\\
\nonumber
&=&
\EE\left(\eta^{-\zeta_{D^{\rho,K}}}, \zeta_{D^{\rho,K}}<\infty 
\right)<\infty.
\end{eqnarray}
We have
$$
\zeta_{D^{\rho,K}}<\infty \, \Rightarrow \, 
\big[\, \forall K'\in \E\setminus \{K\}: \; \zeta_{D^{\rho,K}}=\infty 
\hbox{ and } \zeta^\E=\zeta_{D^{\rho,K}} \, \big].
$$
Then,
$$
\{\zeta^\E=j\}=\bigcup_{K\in \E} \{\zeta_{D^{\rho,K}}=j\}
$$
and the union is disjoint. Hence,
$$
\eta^{-\zeta^\E} {\bf 1}_{\zeta^\E<\infty}=
\sum_{K\in \E}\eta^{-\zeta_{D^{\rho,K}}}
{\bf 1}_{\zeta_{D^{\rho,K}}<\infty}.
$$
Then,
$$
\EE\left(\eta^{-\zeta^\E}, \zeta^\E<\infty\right)=
\sum_{K\in \E} \EE\left(\eta^{-\zeta_{D^{\rho,K}}},
\zeta_{D^{\rho,K}} <\infty\right)<\infty.
$$
Hence, from (\ref{e40y}), we deduce
\begin{equation}
\label{e40yx}
\lim\limits_{n\to \infty}\eta^{-n}\PP(\zeta>n, Y_n\in \partial(\tT)^\E)=
\EE\left(\eta^{-\zeta^\E}, \zeta^\E<\infty\right).
\end{equation}
Then, relations (\ref{50a}), (\ref{e40y}) and (\ref{e40yx}), 
give (\ref{e32}). 

\medskip

Now, relation (\ref{50e}) is a consequence of relations 
(\ref{50f}) and (\ref{e40yx}) because they imply
\begin{eqnarray*}
\lim\limits_{n\to \infty} \eta^{-n} \PP(\zeta>n)&=&
\lim\limits_{n\to \infty} \eta^{-n} \PP(\zeta>n, Y_n\in 
\partial(\tT)^\E)\\
&=&\EE(\eta^{-\zeta^\E}, \zeta^\E<\infty)\in (0,\infty).
\end{eqnarray*}

\medskip

Let us show (\ref{50b}). First, assume $\delta$ is such that 
$\PP_\delta(\zeta^\E<\infty)>0$.
Since there is a path with
positive probability from $\delta$
to some nonempty subset of $\partial(\tT)^\E$,
a similar proof as the one showing (\ref{50e}) gives that
$$
\lim\limits_{n\to \infty} \eta^{-n}
\PP_\delta(\zeta>n)=
\EE_\delta(\eta^{-\zeta^\E},\zeta^\E<\infty)\in (0,\infty),
$$
and so the relation (\ref{50b}) is satisfied.
Now, let $\PP_\delta(\zeta^\E<\infty)=0$. Then, 
$\EE_\delta(\eta^{-\zeta^\E}, \zeta^\E<\infty)=0$
and in (\ref{50b}) we have
${\EE_\delta(\eta^{-\zeta^\E}, \zeta^\E<\infty)}/
{\EE(\eta^{-\zeta^\E}, \zeta^\E<\infty)}=0$. 
We claim that in this case we also have
$\lim\limits_{n\to \infty} \PP_\delta(\zeta>n)/\PP(\zeta>n)=0$.
In fact, $\PP_\delta(\zeta^\E<\infty)=0$ implies
\begin{eqnarray*}
(\beta_0+\theta)^{-n}\PP_\delta(\zeta>n)&=&
(\beta_0+\theta)^{-n}\PP_\delta(\zeta>n, \zeta^\E>n)\\
&=&(\beta_0+\theta)^{-n} \PP(\forall j\le n: 
Y_j\not\in (\partial(T)^\E\cup \{\D^\rho\})<\infty. 
\end{eqnarray*}
Since $\lim\limits_{n\to \infty}\eta^{-n}\PP(\zeta>n)>0$ 
and $\beta_0+\theta<\eta$, the claim follows and (\ref{50b})
is shown.

\medskip

Now, let $P^*$ be the restriction of $P$
to $\partial(\tT)^*$. The last statement we must show 
is that the vector $\varphi$ 
defined in (\ref{rev1}) is a right eigenvector of $P^*$ 
with eigenvalue $\eta$. First take $\delta\in \partial(\tT)^\E$. We 
have $\PP_\delta(\zeta^\E=0)=1$ and so
$\EE_\delta(\eta^{-\zeta^\E},\zeta^\E<\infty)=1$.
Since $P_{\delta,\delta'}>0$ and $\delta'\neq D^\rho$
imply $\delta'=\delta$, from $P_{\delta,\delta}=\eta$ we get
$$
(P^* \varphi)_\delta=
\sum_{\delta':\delta'\neq D^\rho, \delta\to \delta'} P_{\delta,\delta'}
\EE_{\delta'}(\eta^{-\zeta^\E},\zeta^\E<\infty)=\eta
=\eta\, \varphi_\delta.
$$
Now let $\delta$ be such that $\PP_\delta(\zeta^\E<\infty)=0$,
so $\varphi_\delta=0$.
Then $P_{\delta,\delta'}>0$ implies  
$\PP_{\delta'}(\zeta^\E<\infty)=0$ and so
$(P^* \varphi)_\delta=0=\eta\, \varphi_\delta$.

\medskip

Now take $\delta\not\in \partial(\tT)^\E$ with 
$\PP_\delta(\zeta^\E<\infty)>0$. Then, from
the Markov property we get,
\begin{eqnarray*}
\varphi_\delta&=&
\EE_\delta(\eta^{-\zeta^\E},\zeta^\E<\infty)=
\sum_{\delta':\delta'\neq D^\rho, \delta\to \delta'} 
\EE_\delta(\eta^{-\zeta^\E},\zeta^\E<\infty, Y_1=\delta')\\
&=&\sum_{\delta':\delta'\neq D^\rho, \delta\to \delta'} 
P_{\delta,\delta'} \; \eta^{-1}\,
\EE_{\delta'}(\eta^{-\zeta^\E},\zeta^\E<\infty)
=\eta^{-1}\, (P^* \varphi)_\delta.
\end{eqnarray*}
Hence, the result is shown. This finishes the proof of the theorem.
$\Box$

\bigskip

We will get two results from Theorem \ref{theo1}. In the first one
we supply the $Q-$process, which is the Markov chain that 
avoids some forbidden region, in our case the singleton
$\{\otimes_{L\in \D^\rho}\mu_L\}$.
In the second one we give a class of quasi-stationary
distributions, that must be compared with the irreducible case where 
there is a unique one. The $Q-$process in branching process was introduced 
in Section I.D.14 in \cite{an}. In \cite{cms} it can be found 
the construction of the $Q-$process for Markov chains 
and dynamical systems.

\medskip

In the sequel it is convenient to denote by
$\partial(\tT)^*=\partial(\tT)\setminus \{D^\rho\}$
and by $P^*$ the restriction of $P$ to $\partial(\tT)^*$.

\begin{corollary}
\label{cor1}
The following limit exists
$$
\lim\limits_{n\to \infty}\PP(Y_i=\delta_i, i=1,..,j \, | \, \zeta>n)
$$ 
for all $\delta_i\in \partial(\tT)\setminus \{D^\rho\}$, $i=1,..,k$, 
and it vanishes if some 
$\delta_i$ satisfies $\PP_{\delta_i}(\zeta^\E<\infty)=0$.

\medskip

Denote 
$$
\partial(\zeta^\E)=\{\delta\in \partial(\tT)^*:
\PP_{\delta}(\zeta^\E<\infty)>0\}.
$$ 
Then, the matrix 
$Q=\left(Q_{\delta,\delta'}: \delta,\delta'\in \partial(\zeta^\E)\right)$
given by
$$
Q_{\delta,\delta'}=\eta^{-1}\, 
P_{\delta,\delta'}\frac{\EE_{\delta'}(\eta^{\zeta^\E},
\zeta^\E<\infty)}{\EE_\delta(\eta^{\zeta^\E}, \zeta^\E<\infty)},
$$
is an stochastic matrix on $\partial(\zeta^\E)$, and it is satisfied
$$
\forall \delta_i\in \partial(\zeta^\E), i=0,..,j: \quad
\lim\limits_{n\to \infty}\PP_{\delta_0}(Y_i=\delta_i, i=1,..,j \, | \, \zeta>n)=
\prod_{i=0}^{j-1} Q_{\delta_i,\delta_{i+1}}.
$$
That is, $Q$ is the transition matrix of the Markov chain that 
never hits
$\otimes_{L\in \D^\rho}\mu_L$.
\end{corollary}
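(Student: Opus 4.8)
The plan is to recognize $Q$ as the Doob $h$-transform (the $Q$-process) of $P^*$ associated to the right eigenvector $\varphi$ of (\ref{rev1}), and to read off the conditional limits from the state-by-state asymptotics of $\PP_\delta(\zeta>n)$ already produced in the proof of Theorem \ref{theo1}. The first thing I would record is that $\varphi_\delta=\EE_\delta(\eta^{-\zeta^\E},\zeta^\E<\infty)>0$ \emph{exactly} on $\partial(\zeta^\E)$, since the integrand $\eta^{-\zeta^\E}\ge 1$ on $\{\zeta^\E<\infty\}$; this makes $Q_{\delta,\delta'}=\eta^{-1}P_{\delta,\delta'}\varphi_{\delta'}/\varphi_\delta$ well defined on $\partial(\zeta^\E)$. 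The proof of (\ref{50b}) gives more than the stated ratio: for every $\delta\in\partial(\zeta^\E)$ one has $\lim_n\eta^{-n}\PP_\delta(\zeta>n)=\varphi_\delta\in(0,\infty)$, while for $\delta\notin\partial(\zeta^\E)$ the bound (\ref{e36}) forces $\PP_\delta(\zeta>n)\le C'(\beta_0+\theta)^n=o(\eta^n)$, so $\lim_n\eta^{-n}\PP_\delta(\zeta>n)=0=\varphi_\delta$ there as well. Thus $\lim_n\eta^{-n}\PP_\delta(\zeta>n)=\varphi_\delta$ holds on all of $\partial(\tT)^*$, which is the only analytic input needed.

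Next I would check that $Q$ is stochastic on $\partial(\zeta^\E)$, which is immediate from the eigenvector relation $(P^*\varphi)_\delta=\eta\,\varphi_\delta$ of Theorem \ref{theo1}. For $\delta\in\partial(\zeta^\E)$,
\[
\sum_{\delta'}Q_{\delta,\delta'}
=\eta^{-1}\varphi_\delta^{-1}\!\!\sum_{\delta'\in\partial(\tT)^*}\!\!P_{\delta,\delta'}\,\varphi_{\delta'}
=\eta^{-1}\varphi_\delta^{-1}(P^*\varphi)_\delta=1,
\]
where the summands with $\delta'\notin\partial(\zeta^\E)$ drop out because $\varphi_{\delta'}=0$ there. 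The same vanishing shows that $Q_{\delta,\delta'}>0$ forces $\delta'\in\partial(\zeta^\E)$, so $\partial(\zeta^\E)$ is $Q$-closed and $Q$ is a genuine stochastic matrix on it.

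For the finite-dimensional limits I would fix a path $\delta_0\to\cdots\to\delta_j$ of non-absorbing states (otherwise both sides vanish) and apply the Markov property, together with $\{\zeta>n\}=\{Y_n\neq\D^\rho\}$, to split
\[
\PP_{\delta_0}(Y_i=\delta_i,\,i\le j,\ \zeta>n)
=\Big(\prod_{i=0}^{j-1}P_{\delta_i,\delta_{i+1}}\Big)\,\PP_{\delta_j}(\zeta>n-j).
\]
Dividing by $\PP_{\delta_0}(\zeta>n)$ and letting $n\to\infty$, the ratio $\PP_{\delta_j}(\zeta>n-j)/\PP_{\delta_0}(\zeta>n)$ converges to $\eta^{-j}\varphi_{\delta_j}/\varphi_{\delta_0}$ by the asymptotics of the first paragraph. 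Telescoping $\eta^{-j}\varphi_{\delta_j}/\varphi_{\delta_0}=\prod_{i=0}^{j-1}\eta^{-1}\varphi_{\delta_{i+1}}/\varphi_{\delta_i}$ then reassembles the limit as $\prod_{i=0}^{j-1}Q_{\delta_i,\delta_{i+1}}$, as claimed. The vanishing assertion follows by the same display: along any admissible path, reachability of $\partial(\tT)^\E$ propagates backward under $\to$, so $\delta_i\notin\partial(\zeta^\E)$ forces $\delta_{i+1},\dots,\delta_j\notin\partial(\zeta^\E)$; hence $\delta_j\notin\partial(\zeta^\E)$ and the ratio collapses to $0$ by (\ref{50b}).

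The step I expect to be the main obstacle is pinning down the \emph{exact} constant in $\lim_n\eta^{-n}\PP_\delta(\zeta>n)=\varphi_\delta$ for an arbitrary starting state $\delta_0$, rather than merely its existence and positivity: the telescoping is exact only because the numerator and denominator asymptotics share the common prefactor $\eta^{n}$ and the \emph{same} eigenvector entries $\varphi_{\delta}$. Concretely this means re-running the estimates (\ref{e35})--(\ref{e40yx}) verbatim with $\PP_{\delta_0}$ in place of $\PP$, and confirming that the resulting limit is the eigenvector value of (\ref{rev1}) and not just proportional to it; once this matching is secured, the identification with the $h$-transform $Q$ and the product formula are forced.
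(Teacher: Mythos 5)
Your proposal is correct and follows essentially the same route as the paper's own proof: stochasticity of $Q$ via the eigenvector relation $(P^*\varphi)_\delta=\eta\,\varphi_\delta$ with the $\varphi_{\delta'}=0$ terms dropping out, the Markov-property factorization of $\PP_{\delta_0}(Y_i=\delta_i,\,i\le j,\ \zeta>n)$, the ratio asymptotics $\eta^{-n}\PP_\delta(\zeta>n)\to\varphi_\delta$ (obtained, as in the paper, by re-running the argument for (\ref{50e}) from an arbitrary start and using the Lemma \ref{lemma6} bound when $\PP_\delta(\zeta^\E<\infty)=0$), and the telescoping into $\prod_{i=0}^{j-1}Q_{\delta_i,\delta_{i+1}}$. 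Your explicit observations that $\varphi>0$ exactly on $\partial(\zeta^\E)$ (so $Q$ is well defined and $\partial(\zeta^\E)$ is $Q$-closed) and that the exponent in the paper's displayed $Q$ should read $\eta^{-\zeta^\E}$ as in (\ref{rev1}) are correct refinements of points the paper leaves implicit.
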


\begin{proof}
Let us prove that $Q$ is an stochastic matrix. Let
$\varphi$ be the right eigenvector of $P^*$
with eigenvalue $\eta^{-1}$  given in (\ref{rev1}). We have that
$\varphi_\delta$ vanishes when 
$\PP_{\delta}(\zeta^\E<\infty)=0$. Let $\delta\in \partial(\zeta^\E)$. 
We will use that
$P_{\delta,\delta'}=0$ if $\delta\not\rightarrow \delta'$ and that
$$
\PP_{\delta'}(\zeta^\E<\infty)=0 \hbox{ implies } 
\frac{\varphi_{\delta'}}{\varphi_\delta}=\frac{\EE_{\delta'}(\eta^{\zeta^\E},
\zeta^\E<\infty)}{\EE_\delta(\eta^{\zeta^\E}, \zeta^\E<\infty)}=0.
$$
Hence
$$
\sum_{\delta'\in \partial(\zeta^\E)}Q_{\delta,\delta'}=
\eta^{-1}\left(\sum_{\delta'\in \partial(\zeta^\E)}
P_{\delta,\delta'}\frac{\varphi_{\delta'}}{\varphi_{\delta}}\right)
=\eta^{-1} \left( \sum_{\delta'\in \partial(\tT)^*}
P_{\delta,\delta'}\frac{\varphi_{\delta'}}{\varphi_{\delta}}\right)=1.
$$
The last equality because $\varphi$ is a right eigenvector 
with eigenvalue $\eta$.
Now, from the Markov property we obtain for $n>j$,
$$
\PP(Y_i=\delta_i, i=1,..,j \, | \, \zeta>n)=
\PP(Y_i=\delta_i, i=1,..,j)\frac{\PP_{\delta_j}(\zeta>n-j)}{\PP(\zeta>n)}.
$$
Now we use the ratio limit result (\ref{50b}).
This limit vanishes if $\PP_{\delta_j}(\zeta^\E<\infty)=0$. Also it
vanishes when $\PP_{\delta_i}(\zeta^\E<\infty)=0$ for some $i<j$ because 
$P_{\delta_i,\delta_{i+1}}>0$ implies 
$\PP_{\delta_{i+1}}(\zeta^\E<\infty)=0$.

\medskip

Finally, let $\delta_i\in  \partial(\zeta^\E)$ for $i=0,..,j$. We have
\begin{eqnarray}
\nonumber
&{}&\lim\limits_{n\to \infty}\PP_{\delta_0}
(Y_i=\delta_i, i=1,..,j \, | \, \zeta>n)\\
\nonumber
&{}& =
\lim\limits_{n\to \infty}
\PP_{\delta_0}(Y_i=\delta_i, i=1,..,j)
\frac{\PP_{\delta_j}(\zeta>n-j)}
{\PP_{\delta_0}(\zeta>n)}\\
\label{rats}
&{}&=\PP_{\delta_0}(Y_i=\delta_i, i=1,..,j)
\frac{\varphi_{\delta_j}}{\varphi_{\delta_0}}\eta^{-j}\\
\nonumber
&{}& =\prod_{l=0}^{j-1} \left(\eta^{-1}P_{\delta_l, \delta_{l+1}}\,
\frac{\varphi_{\delta_{l+1}}}{\varphi_{\delta_l}}\right).
\end{eqnarray}
In (\ref{rats}) we used 
$\lim\limits_{n\to \infty}\PP(\zeta>n-j)/\PP(\zeta>n)=\eta^{-j}$,
which is a consequence of (\ref{50e}).
Then the result follows.
\end{proof}

Let $\nu=(\nu_\delta: \delta\in \partial(\tT)^*)$ be a 
probability measure on $\partial(\tT)^*$. 
If necessary, $\nu$ will be identified with its extension
on $\partial(\tT)$ with $\nu_{D^\rho}=0$. We say that $\nu$ is supported 
by some subset ${\widetilde{\partial}}\subseteq \partial(\tT)^*$ if 
$\nu({\widetilde{\partial}})=1$.
We denote by $\nu'$ the row vector associated to $\nu$.

\begin{corollary}
\label{cor2}
Every probability measure $\nu$ on $\partial(\tT)^*$ 
supported on $\partial(\tT)^\E$ satisfies 
$\nu'P^*=\eta \, \nu'$ and it is a quasi-stationary 
distribution, that is it satisfies
\begin{equation}
\label{e43}
\forall n\ge 1, \, \forall \delta\in \partial(\tT)^\E: \quad 
\PP_\nu(Y_n=\delta \, | \, \zeta>n)=\nu_\delta. 
\end{equation}
Moreover, if for some $a\le \eta$ we have
$\E(a)=\{K\in \Y(\J_\rho): \rho^K_K=a\}\neq \emptyset$,
%(see (\ref{e50d})),
then any probability measure ${\widetilde{\nu}}$ supported 
on $\partial(\tT)^{\E(a)}=\{\D^{\rho,K}: K\in \E(a)\}$ 
satisfies ${\widetilde{\nu}}'P^*=a\, {\widetilde{\nu}}'$ 
and it is a quasi-stationary distribution,
\begin{equation}
\label{e44}
\forall n\ge 1, \, 
\forall \delta\in \partial(\tT)^{\E(a)}: \quad 
\PP_{{\widetilde{\nu}}}(Y_n=\delta \, | \, \zeta>n)={\widetilde{\nu}}_\delta.
\end{equation}
\end{corollary}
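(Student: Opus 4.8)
The plan is to reduce both assertions to the standard characterization of quasi-stationary distributions for an absorbing chain: if a probability vector $\nu$ on $\partial(\tT)^*$ is a left eigenvector of the sub-stochastic restriction $P^*$, say $\nu'P^*=a\,\nu'$, then $\nu$ is automatically quasi-stationary with geometric survival rate $a$. So the task splits into (a) proving the eigenvector identity, and (b) turning it into the conditional-invariance statements (\ref{e43}) and (\ref{e44}). Step (b) is routine and I would do it once and reuse it for both $a=\eta$ and general $a$: since $\D^\rho$ is absorbing, $\{Y_n=\delta,\zeta>n\}=\{Y_n=\delta\}$ for every $\delta\neq\D^\rho$, and any path reaching such a $\delta$ stays in $\partial(\tT)^*$, so $\PP_\nu(Y_n=\delta)=(\nu'(P^*)^n)_\delta=a^n\nu_\delta$; summing over $\delta\neq\D^\rho$ gives $\PP_\nu(\zeta>n)=a^n$, and dividing yields $\PP_\nu(Y_n=\delta\mid\zeta>n)=\nu_\delta$. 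Thus everything hinges on step (a).

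For the first statement ($a=\eta$) I would prove the eigenvector identity through the following structural dichotomy: from any state $\delta=\D^{\rho,K}$ with $K\in\E$, the chain can only stay at $\delta$ or jump straight to $\D^\rho$. Indeed the atoms of $\delta$ are $K$ together with atoms $L\in\D^\rho$ missing $K$; each such $L$ has $\rho^L_L=1$ by (\ref{e13}) and so cannot split, while $K$ either stays, with probability $\rho^K_K=\eta$ by (\ref{e24}), or is split by some $J\in\J_\rho$ into $M=K\cap J$ and $K\setminus M$. By (\ref{e12}) such a split forces $\rho^M_M>\rho^K_K=\eta$ and $\rho^{K\setminus M}_{K\setminus M}>\eta$; since $\eta$ is the maximum of $\rho^{K'}_{K'}$ over non-atoms and $M,K\setminus M\in\Y(\J_\rho)$, both values must equal $1$, so by (\ref{e13}) both pieces lie in $\D^\rho$ and the refined partition is exactly $\D^\rho$. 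Hence the only non-absorbing successor of $\delta$ is $\delta$ itself, with $P^*_{\delta,\delta}=\eta$, and in particular distinct support states never communicate. Reading this off coordinatewise and using linearity gives $\nu'P^*=\eta\,\nu'$ for any $\nu$ supported on $\partial(\tT)^\E$, and step (b) finishes (\ref{e43}).

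For the Moreover part I would follow exactly the same two steps with $\eta$ replaced by $a$, so the content is again the eigenvector identity $\widetilde{\nu}'P^*=a\,\widetilde{\nu}'$. Here lies the main obstacle. The self-loop computation is unchanged, $P^*_{\D^{\rho,K},\D^{\rho,K}}=\rho^K_K=a$ by (\ref{e24}), and (\ref{e12}) still guarantees that any proper split of $K$ produces pieces of strictly larger $\rho$-value; but maximality is no longer available, so a priori a piece $M$ could be a non-atom with $a<\rho^M_M<1$, producing a successor $\D^{\rho,M}\notin\partial(\tT)^{\E(a)}$ and destroying the eigenvector relation. The crux is therefore to show that for $K\in\E(a)$ every proper split lands on $\D^\rho$, equivalently that each such $K$ is the union of exactly two atoms of $\D^\rho$; granting this, the argument of the preceding paragraph transfers verbatim.

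I would try to establish this by arguing that if $K\in\E(a)$ were the union of three or more atoms of $\D^\rho$, then the $J\in\J_\rho$ that must cut $K$ (since $K\notin\D^\rho$) would split it into two pieces at least one of which is itself a union of $\ge 2$ atoms, hence a non-atom with $\rho$-value strictly between $a$ and $1$, contradicting the desired dichotomy. I expect verifying this two-atom structure to be the delicate combinatorial point on which the Moreover statement rests, and I would examine carefully whether it holds for \emph{all} admissible $a\le\eta$ or only under an additional hypothesis: for instance, when $\rho_I>0$ the coarse state $\{I\}=\D^{\rho,I}$ lies in $\partial(\tT)^{\E(\rho_I)}$, yet $I$ is the union of \emph{all} atoms of $\D^\rho$ and is split into non-atoms, so this borderline case is exactly where I would check that the claimed eigenvector relation, and hence (\ref{e44}), is not compromised.
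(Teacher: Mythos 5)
Your treatment of the first statement is correct and is essentially the paper's own proof: the paper derives $\nu'P^*=\eta\,\nu'$ and (\ref{e43}) directly from the dichotomy (\ref{eq41}) stated in the proof of Theorem \ref{theo1}, which you re-derive correctly from (\ref{e12}), (\ref{e13}) and the maximality of $\eta$ (for the piece $K\setminus M=K\cap J^c$ you implicitly use the symmetrization $\rho_J=\rho_{J^c}$ of Remark \ref{rem0ax}, as does the paper). Your reduction of the Moreover part to a two-atom property is also exactly right: since no $J\in\J_\rho$ cuts an atom of $\D^\rho$ (by (\ref{e8})), every split of $K$ lands on $\D^\rho$ if and only if $K$ is the union of exactly two atoms of $\D^\rho$.

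Where you stopped is precisely where the paper's own proof has a gap: it disposes of (\ref{e44}) with ``The proof of (\ref{e44}) is completely similar'', which tacitly assumes the analogue of (\ref{eq41}) for $\partial(\tT)^{\E(a)}$; for $a<\eta$ the maximality argument is unavailable and, as you suspected, the claim is false in general. Concretely: take $I=\{1,2,3\}$ and $\J_\rho=\{I,\{1\},\{2,3\},\{1,2\},\{3\}\}$ with $\rho_I=r$, $\rho_{\{1\}}=\rho_{\{2,3\}}=p$, $\rho_{\{1,2\}}=\rho_{\{3\}}=q$, where $r,p,q>0$, $q<p$ and $r+2p+2q=1$. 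Then $\D^\rho$ is the partition into singletons, the non-atoms of $\Y(\J_\rho)$ are $I$, $\{2,3\}$, $\{1,2\}$ with $\rho^I_I=r$, $\rho^{\{2,3\}}_{\{2,3\}}=r+2p=\eta$ and $\rho^{\{1,2\}}_{\{1,2\}}=r+2q<\eta$. For $a=r\le\eta$ one gets $\E(a)=\{I\}$ and $\widetilde{\nu}$ the point mass at $\{I\}=\D^{\rho,I}$, a case the paper explicitly includes just before Theorem \ref{theo1}. But $\{I\}\to\{\{1\},\{2,3\}\}$ with probability $2p>0$, and this state is neither $\{I\}$ nor $\D^\rho$, so $\widetilde{\nu}'P^*$ charges it and $\widetilde{\nu}'P^*\neq r\,\widetilde{\nu}'$; moreover $\PP_{\{I\}}(Y_n=\{I\})=r^n$ while $\PP_{\{I\}}(\zeta>n)\ge 2p\,\eta^{n-1}$ (enter $\{\{1\},\{2,3\}\}$ and loop there, its self-loop probability being $\eta$), so $\PP_{\widetilde{\nu}}(Y_n=\{I\}\mid\zeta>n)\to 0\neq 1$ and (\ref{e44}) fails. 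So the Moreover statement holds only under the additional hypothesis you isolated (every $K\in\E(a)$ is a union of exactly two atoms of $\D^\rho$, equivalently every transition from $\D^{\rho,K}$ goes to itself or to $\D^\rho$); your refusal to assert it for all $a\le\eta$ was the correct call, and the missing combinatorial lemma you were looking for does not exist.
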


\begin{proof} 
With the above notation and by using (\ref{eq41}) we get,
$$
(\nu' P^*)_\delta= P_{\delta,\delta} \, 
\nu_\delta=\eta \, \nu_\delta, 
$$
so $\nu' P^*=\eta \nu'$. By iteration we find 
$\nu' P^{*n}=\eta^n \, \nu'$.
Note that this is equivalent to
$$
(\nu' P^{*n})_\delta=\PP_{\nu}(Y_n=\delta)=\PP_{\nu}(\forall j\le n \; 
Y_j=\delta) =\eta^n \, \nu'_\delta.
$$ 
Now
$$
\PP_\nu(\zeta>n)=
\sum_{\delta\in \partial(\tT)^\E} (\nu' P^{*n})_\delta=\eta^n 
\left(\sum_{\delta\in \partial(\tT)^\E} \nu_\delta\right)=\eta^n.
$$ 
Hence, relation (\ref{e43}) is proven. The proof of (\ref{e44}) is 
completely similar. 
\end{proof}

This is analogous for positive eigenvectors. Let 
$\widetilde{\partial}\subseteq \partial(\tT)^\E$
be a nonempty set, then the characteristic function 
${\bf 1}_{\widetilde{\partial}}$ is a right eigenvector of $P^*$ 
with eigenvalue $\eta$. Also, if 
$\widetilde{\partial}\subseteq 
\partial(\tT)^{\E(a)}$ is a nonempty subset for some 
$a\le \eta$, then 
${\bf 1}_{\widetilde{\partial}}$ is a right eigenvector of $P^*$
with eigenvalue $a$. 
We notice that an analogous of the $Q-$process construction 
can be written on the class of states $\delta$'s that verify
$\PP_\delta(\zeta^\E<\infty))=0$ and 
$\PP_\delta(\zeta_{{\widetilde{\partial}}}<\infty)>0$,
being ${\widetilde{\partial}}=\{\delta\in \partial(\tT)^*:
P_{\delta,\delta}=\beta_0\}$.

\begin{remark}
\label{rem2a}
The results we have obtained can be easily extended to general
probability spaces. In fact, let $((X_i,\B_i): i\in I)$ 
be a finite collection of measurable spaces 
and $(\prod_{i\in I}X_i,\otimes_{i\in I} \B_i,\mu)$ be a 
probability space. For $J\in \AS$, let $\B_J=\otimes_{i\in J} 
\B_i$ be the product $\sigma$-field on $\prod_{i\in J}X_i$ 
and $\mu_J$ be the marginal on $(\prod_{i\in J}X_j, \B_J)$, 
$$
\mu_J(V)=\mu\left(V\times \,\prod_{i\in I\setminus J}X_j\right)
\hbox{ for } V\in \B_J.
$$
Then, introduce the partitions on $I$ as in Sections
\ref{sec1} and \ref{sec2} and as done in Definition \ref{def1}, 
for a probability vector $\rho=(\rho_J: J\in \AS^{(\emptyset)})$ 
define
$\Xi[\mu]=\rho_I\, \mu+ \sum_{J\in \AS} \rho_J\, \mu_J\otimes 
\mu_{J^c}$.
Then, all the results of this paper, in particular Theorem 
\ref{theo0} and Theorem \ref{theo1}, can be written in this setting.
The unique thing one must take care is to replace the expression 
$\sum\limits_{x_{K}\in \prod_{i\in K} A_i} \mu_K(x_K) 
g(x_k,x_{K^c})$ by 
$\int\limits_{\prod_{i\in K} X_i} g(x_k,x_{K^c}) d\mu_K$ 
when it is required, for instance in Lemma \ref{lemma3}.
\end{remark}

\smallskip

\begin{remark}
\label{rem3a}
The atoms of the partition $\D^\rho$ can always be
assumed to be singletons, that is $\D^\rho=\D_{si}$. 
We have not done it because on one 
hand, there is no substantial gain in notation, and on the other hand, 
this can be made only a posteriori because the 
input is the vector $\rho=(\rho_J: J\in \AS^{(\emptyset)})$ and 
the atoms are obtained once computing the set of nonempty intersections 
(\ref{e7}) from the support $\J_\rho$, the
atoms are the minimal elements, so they satisfy (\ref{e8}). 
\end{remark}

\medskip

\noindent {\bf Final comment}. 
As already said, all we have done does not require
the operation $\otimes$ to be the product
between probability measures. As it can be checked, the results can 
be extended to any operation $\otimes$ defined in the domains 
(\ref{eab0}) that satisfies commutativity, associativity 
(\ref{eab4}), stability under restriction (\ref{eab})
and $\mu_\emptyset$ is the identity element. In particular, 
 commutativity and associativity imply that for any partition
the probability measure $\otimes_{K\in \delta} \mu_K$ is well-defined. 
Obviously, the partition
$\otimes_{L\in \D^\rho} \mu_L$ could have a meaning different from 
a product measure of the marginals on the atoms, but it continue 
to play the same central role in all our constructions and results.

\bigskip

\noindent{\bf Acknowledgments}. We thank support from the CMM Basal 
CONICYT Project PB-03. We acknowledge discussions with Dr. Thierry 
Huillet from CNRS and the hospitality of 
the Laboratoire de Physique Th\'{e}orique 
et Mod\'{e}lisation at the
Universit\'{e} de Cergy-Pontoise, where this work was started.

\noindent SERVET MART\'INEZ

\noindent {\it Departamento Ingenier{\'\i}a Matem\'atica and Centro
Modelamiento Matem\'atico, Universidad de Chile,
UMI 2807 CNRS, Casilla 170-3, Correo 3, Santiago, Chile.}
e-mail: smartine@dim.uchile.cl

\label{lastpage}

\end{document}